\theoremstyle{plain} 
\newtheorem{lemma}[equation]{Lemma} 
\newtheorem{proposition}[equation]{Proposition} 
\newtheorem{theorem}[equation]{Theorem} 
\newtheorem{corollary}[equation]{Corollary} 
\newtheorem{conjecture}[equation]{Conjecture}
\def\Xint#1{\mathchoice
   {\XXint\displaystyle\textstyle{#1}}%
   {\XXint\textstyle\scriptstyle{#1}}%
   {\XXint\scriptstyle\scriptscriptstyle{#1}}%
   {\XXint\scriptscriptstyle\scriptscriptstyle{#1}}%
   \!\int}
\def\XXint#1#2#3{{\setbox0=\hbox{$#1{#2#3}{\int}$}
     \vcenter{\hbox{$#2#3$}}\kern-.5\wd0}}
\def\dashint{\Xint-}
\newcommand{\D}{ \, \mathrm{d}} %
\renewcommand{\roman}[1]{%
  \textup{\uppercase\expandafter{\romannumeral#1}}%
}
\theoremstyle{definition}
\newtheorem{definition}[equation]{Definition} 
\theoremstyle{remark}
\newtheorem{remark}[equation]{Remark}
\numberwithin{equation}{section}
\date{\today}
\title[] {Weighted Weak Type estimates\\ for non-integral Square Functions}
\subjclass[2020]{Primary: 42B25 Secondary: 42B20, 42B35}
\author{Dario Mena}
\address{Centro de Investigaci\'{o}n en Matem\'{a}tica Pura y Aplicada \\ Escuela de Matem\'{a}tica, Universidad de Costa Rica}
\email {dario.menaarias@ucr.ac.cr}
\author{Maria Carmen Reguera} 
\address{Universidad de M\'alaga, M\'alaga, Spain}
\email {m.reguera@uma.es}
\author{Luz Roncal}
	\address{BCAM -- Basque Center for Applied Mathematics, 48009 Bilbao, Spain \newline
	Universidad del Pa{\'i}s Vasco / Euskal Herriko Unibertsitatea, 48080 Leioa, Spain \newline
	Ikerbasque, Basque Foundation for Science, 48011 Bilbao, Spain}	
    \email{lroncal@bcamath.org}
\thanks{ Darío Mena was partially supported by The University of Costa Rica, through the grant C1012. Maria Carmen Reguera was supported by the Spanish Ministry of Science and Innovation through the projects RYC2020-030121-IAEI/10.13039/501100011033/ and PID2022-136619NB-I00 funded by MCIN/AEI/10.13039/501100011033/FEDER, UE. Luz Roncal was supported by the Basque Government through the BERC 2022-2025 program and by the Spanish Ministry of Science and Innovation: BCAM Severo Ochoa accreditation CEX2021-001142-S/MICIN/AEI/\allowbreak10.13039/501100011033, PID2023-146646NB-I00 funded by MICIU/\allowbreak AEI/10.13039/501100011033 and by ESF+ and CNS2023-143893, and by IKERBASQUE}
\begin{document}
\begin{abstract} 
We provide quantitative weighted weak type estimates for non-integral square functions in the critical case $p=2$ in terms of the $A_p$ and reverse H\"older constants associated to the weight. The method of proof uses a decoupling of the role of the weights via a quantitative version of Gehring's lemma. The results can be extended to other $p$ in the range of boundedness of the square function at hand. 
\end{abstract}

\maketitle

\section{Introduction} 
\label{sec:introduction}

In the present paper, we are interested in quantitative weighted weak $L^p$ estimates for non-integral square functions, ala \cite{AM3}\footnote{By \textit{quantitative weighted estimate} we mean a weighted estimate where the dependence on the characteristic of the weight is explicit. Otherwise, the weighted estimates is \textit{qualitative}.}. The study of sharp weighted estimates for classical square functions, such as the intrinsic square function defined by Wilson in \cites{MR2414491, MR2359017}, has been the source of copious activity in the last decade. Lerner produced the sharp estimate in terms of the constant associated to the Muckenhoupt weight for the strong weighted $L^p$ in \cite{MR2770437}. However, the case of weighted weak $L^p$ estimates is still not completed. While the case $p\neq 2$ is fully understood (see \cites{MR3778152, DLR}), the existing estimate for the critical case $p=2$ is not known to be sharp. Given the intrinsic square function $S_{\operatorname{int}}$ and a weight $w$ in the Muckenhoupt class $A_2$ with constant $ [w]_{A_2}$ (as in \eqref{e.ap} below), the following was conjectured by Domingo-Salazar, Lacey and Rey in \cite{DLR}.
\begin{conjecture} \label{c.weak2}
For $w\in A_2$,
    \begin{align}
\lVert S_{\operatorname{int}}f \rVert_{L^{2,\infty (\omega)}} &\lesssim  [w]_{A_2}^{1/2} \left (1 + \log_{+}[w]_{A_{\infty}} \right )^{1/2} \lVert f \rVert_{L^2 (w)},
\end{align}
and the estimate is sharp.
\end{conjecture}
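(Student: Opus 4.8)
The plan is to combine (i) pointwise sparse domination of $S_{\operatorname{int}}$ in square-function form, (ii) a reverse-H\"older decoupling of the two weight characteristics, and (iii) a Calder\'on--Zygmund stopping time that reduces the matter to a weighted Carleson-type embedding for the sparse coefficients; the matching lower bound is the softer half, supplied by the extremizer constructions in \cites{DLR, MR3778152}, so I concentrate on the inequality. For the upper bound, Lerner's sparse control of the intrinsic square function \cite{MR2770437} furnishes, for every $f$, a sparse family $\mathcal S$ of dyadic cubes with
\[
S_{\operatorname{int}}f(x)\;\lesssim\;\Big(\sum_{Q\in\mathcal S}\langle|f|\rangle_Q^{2}\,\1_Q(x)\Big)^{1/2}\;=:\;(\mathcal A_{\mathcal S}f)(x)^{1/2};
\]
since $\{S_{\operatorname{int}}f>\mu\}\subset\{\mathcal A_{\mathcal S}f\gtrsim\mu^{2}\}$, it is enough to prove, uniformly over such $\mathcal S$,
\[
\|\mathcal A_{\mathcal S}f\|_{L^{1,\infty}(w)}\;\lesssim\;[w]_{A_2}\,\big(1+\log_+[w]_{A_\infty}\big)\,\|f\|_{L^2(w)}^{2}.
\]

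Write $\sigma:=w^{-1}$. The decoupling keeps the $A_2$ balance $\langle w\rangle_Q\langle\sigma\rangle_Q\le[w]_{A_2}$ only to convert Lebesgue averages of $f$ into $\sigma$-averages, while all of the packing and summation is carried out against $w$ itself. The quantitative Gehring (reverse H\"older) lemma then supplies an exponent $r=1+c\,[w]_{A_\infty}^{-1}$ with $\langle w^{r}\rangle_Q^{1/r}\le 2\langle w\rangle_Q$ on every cube, and symmetrically for $\sigma$ with $[\sigma]_{A_\infty}\lesssim[w]_{A_2}$. This extra integrability lets one pass from Lebesgue packing to $w$-packing on each stopping block at the cost of a factor $\sim 1$ rather than a power of $[w]_{A_\infty}$; what remains is the $\sim 1/(r-1)$ governing how many stopping generations may accumulate, and a dyadic grouping of those generations turns that factor into $1+\log_+[w]_{A_\infty}$, with the exponent $\tfrac12$ of the conjecture reflecting the square root linking $S_{\operatorname{int}}$ to $\mathcal A_{\mathcal S}$.

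For the core step, fix $\mu>0$, stop at the maximal cubes $\{Q_j\}$ with $\langle|f|\rangle_{Q_j}>\mu$, and split $\mathcal A_{\mathcal S}f$ into the part $\mathcal G$ coming from cubes $Q\in\mathcal S$ lying outside every $Q_j$ and the part $\mathcal B$ coming from cubes contained in some $Q_j$. The excised set is harmless: $\mu^{2}\,w\big(\bigcup_j Q_j\big)\lesssim[w]_{A_2}\|f\|_{L^2(w)}^{2}$ by Cauchy--Schwarz and the $A_2$ balance, with no $A_\infty$. On $\mathcal G$ and $\mathcal B$ one applies Chebyshev in $L^1(w)$ --- iterating the stopping time if necessary, so that each iteration shrinks the relevant mass by a definite proportion thanks to the reverse H\"older gain --- which reduces everything to a weighted \emph{Carleson (Fefferman--Stein) embedding at the diagonal exponent} for the sparse coefficients: test the sparse form against $w$, replace each ratio $w(Q)/|Q|$ by its slightly larger $L^{r}$ analogue at cost $\sim 1$, and sum over the sparse cubes using the packing $\sum_{Q\subset R,\,Q\in\mathcal S}|Q|\lesssim|R|$. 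It is precisely here that a naive argument produces a \emph{power} of $[w]_{A_\infty}$.

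The main obstacle is to improve that power to the logarithm $1+\log_+[w]_{A_\infty}$ at the critical, self-dual exponent $p=2$. For $p\neq2$ the order $2$ of the square function and the Lebesgue exponent $p$ disagree, and this mismatch is exactly what allows the proofs of \cites{DLR, MR3778152} to dispense with any $A_\infty$ factor in the weak $(p,p)$ bound; at $p=2$ the two coincide, the $A_2$--$A_\infty$ splitting collapses onto the diagonal, and the embedding of the previous step loses its slack. Gaining the missing $\tfrac12$ power of $\log$ seems to require either a sharp entropy / log-bump refinement of the stopping time, in which the number of relevant stopping generations is counted through the John--Nirenberg self-improvement of the $A_\infty$ condition instead of being bounded crudely, or an interpolation between the ($A_\infty$-free) weak $(p,p)$ estimates with $p$ on either side of $2$, whose constants must then be shown to blow up no faster than $(p-2)^{-1}$ as $p\to2$ while the dyadic sparse family and its Carleson constant are kept $p$-independent. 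Reconciling the sharp sparse packing with a logarithmic-in-$[w]_{A_\infty}$ control of the tail at the self-dual exponent is where the hard work lies, and --- as the abstract signals --- is presumably why only a weaker, though still quantitative, bound is established in the present paper.
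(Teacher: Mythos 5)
You have set out to prove a statement that the paper itself records as an \emph{open conjecture}, not a theorem: Conjecture~\ref{c.weak2} is attributed to Domingo--Salazar, Lacey and Rey, and immediately after stating it the authors write that ``the most challenging part of this conjecture is to establish the sharpness of the estimate, which is unknown to this date.'' So there is no ``paper's own proof'' to compare your attempt against. What is known is the inequality half, which is a theorem of \cite{DLR}; the genuinely open half is the sharpness. This inverts your framing: you dismiss the lower bound as ``the softer half, supplied by the extremizer constructions in \cites{DLR, MR3778152},'' but neither of those papers proves that the $(1+\log_{+}[w]_{A_\infty})^{1/2}$ factor is necessary at $p=2$ for $S_{\operatorname{int}}$ --- Hyt\"onen--Li handle the $p\neq 2$ weak bounds (where no logarithm appears at all), and \cite{DLR} proves the upper bound, not sharpness. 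Asserting an extremizer exists here is precisely the open problem, so this step of your argument is not a gap but a claim of something false (or at least unproved).

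On the upper bound, your sketch is broadly the \cite{DLR}-style argument: sparse square-function domination, a stopping-time reduction to a weighted packing/embedding for the sparse coefficients, and a pigeonholed summation that produces the logarithm. Two technical remarks. First, the ``reverse-H\"older decoupling of the two weight characteristics'' is misplaced for $S_{\operatorname{int}}$: the relevant sparse form there has plain Lebesgue averages on the dual side (in the paper's notation, $q_0^*=1$), so no reverse-H\"older self-improvement of the weight is needed; that mechanism (Proposition~\ref{prop:srH} and the lemma after it) is introduced in this paper specifically because the non-integral operators of Theorem~\ref{thm:main} have $q_0^*>1$. Second, your closing paragraph suggests that the authors attempted Conjecture~\ref{c.weak2} and ``only'' obtained a weaker bound; in fact Theorem~\ref{thm:main} concerns a different class of operators (non-integral square functions with limited $L^p$ range $p_0<p<q_0$) and a different, $\operatorname{RH}_{q_0^*}$-weighted setting, so it is not a partial result toward this conjecture. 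You do correctly identify where the difficulty lies at the self-dual exponent $p=2$, and you correctly refuse to claim a complete argument, but the proposal does not prove the conjecture, mischaracterizes which half is open, and conflates the paper's actual theorem with a weak form of the conjecture it states as motivation.
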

The most challenging part of this conjecture is to establish the sharpness of the estimate, which is unknown to this date. In contrast to the conjectured estimate, it was established by Ivanisvili, Mozolyako and Volberg  \cite{ivanisvili2022strong}, that the logarithm can be removed from the conjectured bound, if we consider the operator $S_{\operatorname{int}}$ acting on indicator functions.

Motivated by this conjecture, we look at quantitative weighted weak estimates at the critical point $p=2$ for a different class of square functions, namely the non-integral square functions studied in \cite{AM3}; the prototypical models for these operators are the Littlewood--Paley--Stein square functions \cite{St}*{Chapter IV} which are defined as
$$
G_L(f)(x)=\Big(\int_0^{\infty}\big|\nabla e^{-tL}f(x)\big|^2\,dt\Big)^{1/2},\qquad g_L(f)(x)=\Big(\int_0^{\infty}\big|(tL)^{1/2}e^{-tL}f(x)\big|^2\frac{dt}{t}\Big)^{1/2},
$$
where $L$ belongs to some class of elliptic operators. The study of such non-integral square functions has special relevance due to its connection with the Kato square root problem \cite{Au}. 

In this paper, we consider non-integral square functions, which we will denote by $S$, in a broader generality as defined, for example, in \cite{BBR}*{pp. 4--5}. These square functions are not necessarily bounded in the whole range of $L^p$ spaces, with $1 < p < \infty$. Typically such square functions are bounded from $L^p$ into $L^p$ for $p_0 < p < q_0$ and satisfy a weak estimate at the endpoint $p_0$ for some $p_0, q_0$ such that $1 \leq p_0 < 2 < q_0 \leq \infty$. 

We are interested in finding the sharp constant for the weighted estimate
$$
S:\, L^2(w) \to  L^{2,\infty}(w),
$$
which is known to hold for such square functions when $w\in A_{2/p_0}\cap  \operatorname{RH}_{q_0^*}$, where $q_0^*$ is the dual exponent of $q_0/2$. See \cite{AM3} for the qualitative strong estimate and \cite{BBR} for the quantitative one, from which the weak one follows trivially. 

A weight $w$ is a positive locally integrable function; we say that a weight $w$ is in the Muckenhoupt $A_p$ class for $1<p<\infty$, and we denote it by $w\in A_p$, if
\begin{equation}\label{e.ap}
  [w]_{A_p}:=\sup_{Q \,\,\text{cube }} \dashint_{Q} w \D{\mu}
  \left(\dashint_{Q} w^{1 - p'} \D{\mu} \right)^{p-1}<\infty,
\end{equation}
where $p'=p/(p-1)$ is the dual exponent of $p$.
We say that a weight $w$ belongs to the reverse H\"older class $\operatorname{RH}_p$ for $p>1$ if 
\begin{equation*}
  [w]_{\operatorname{RH}_p}:= \sup_{Q \,\,\text{cube }} \left(\dashint_{Q} w^p \D{\mu} \right)^{1/p} \left( \dashint_{Q} w \D{\mu} \right)^{-1}<\infty.
\end{equation*}

Our main result reads as follows:

\begin{theorem} 
\label{thm:main}
Let $p_0<2<q_0$. For any function $f\in L^2(w)$, and weight $w\in A_{\frac{2}{p_0}}\cap \operatorname{RH}_{(\frac{q_0}{2})'}$ we have
$$
\|S(f)\|_{L^{2,\infty}(w)} \le C [w]_{A_{2/p_0}}^{1/2} [w]_{ \operatorname{RH}_{q_0^*}}^{1/2}\eta_{\varepsilon} ([w]_{ \operatorname{RH}_{q_0^*}}, [w]_{A_{\infty}})^{1/2}\|f\|_{L^2(w)},
$$
where  
\begin{equation}\label{e.excess}
\eta_{\varepsilon} ([w]_{ \operatorname{RH}_{q_0^*}}, [w]_{A_{\infty}}) := [w]_{ \operatorname{RH}_{q_0^*}}^{1-\frac{\varepsilon}{q_0^*(q_0^*+\varepsilon-1)}}   \frac{1}{\varepsilon} \Big(   \frac{1}{\varepsilon} +\log[w]_{A_{\infty}}\Big)
\end{equation}
and $0 < \varepsilon \leq \frac{q_0^*}{2^{d+1} [w^{q_0^*}]_{A_{\infty}}-1 }$.
The estimate is uniform in the weight.
\end{theorem}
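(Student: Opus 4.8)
The plan is to prove the weak-type bound via a sparse-domination/Calderón–Zygmund decomposition argument in which the two weight constants $[w]_{A_{2/p_0}}$ and $[w]_{\operatorname{RH}_{q_0^*}}$ are decoupled and handled by different mechanisms, using a quantitative Gehring lemma to gain the small excess exponent $\varepsilon$ in the reverse Hölder scale. First I would reduce to a good-$\lambda$ or level-set estimate: fixing $\lambda>0$, perform a Calderón–Zygmund stopping-time decomposition of $f$ at height $\lambda$ relative to the measure $w\,d\mu$ (or relative to $d\mu$, whichever makes the interplay with the off-diagonal decay of $S$ cleanest), splitting $f=g+b$ into a good part and a sum of bad parts $b_j$ supported on disjoint cubes $Q_j$ with $\sum_j w(Q_j)\lesssim \lambda^{-2}\|f\|_{L^2(w)}^2$. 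The good part is controlled by the $L^2(w)\to L^2(w)$ bound (equivalently the known quantitative strong estimate from \cite{BBR}) together with Chebyshev, producing the factor $[w]_{A_{2/p_0}}^{1/2}[w]_{\operatorname{RH}_{q_0^*}}^{1/2}$ times something; the key is that for the good part we only need the square root of the strong constant because we are at the $L^2$ endpoint.

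The main work is the bad part. Here I would exploit the non-integral structure: $S$ does not have a pointwise kernel, but it has $L^{p_0}$–$L^2$ off-diagonal (Gaffney–Davies type) decay, so for each bad cube $Q_j$ one estimates $S(b_j)$ on the annuli $2^{k+1}Q_j\setminus 2^kQ_j$ with geometric decay in $k$, paying an $L^{p_0}$ average of $b_j$ rather than an $L^1$ average. Summing over $j$ and $k$ and passing to the weighted measure of the exceptional set requires controlling, for each annulus, a quantity of the form $w(2^kQ_j)$ against $(\fint_{Q_j}|b_j|^{p_0})^{2/p_0}$-type terms; this is exactly where $A_{2/p_0}$ enters (to move from the $\mu$-average of $|b_j|^{p_0}$ to a $w$-average) and where $\operatorname{RH}_{q_0^*}$ enters (to absorb the $L^{q_0}$-side of the off-diagonal estimate, i.e. to compare $w(2^kQ_j)$ with a power of $w^{q_0^*}$-averages). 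The geometric decay from the off-diagonal bounds must beat the polynomial growth $w(2^kQ_j)\lesssim 2^{kn[w]_{A_\infty}\text{-type factor}}$; this is the step I expect to be the principal obstacle, and it is also where the logarithmic gain $\frac1\varepsilon(\frac1\varepsilon+\log[w]_{A_\infty})$ is produced, by optimizing over how many annuli are summed "exactly'' versus "crudely.''

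To extract the precise exponent $1-\frac{\varepsilon}{q_0^*(q_0^*+\varepsilon-1)}$ on $[w]_{\operatorname{RH}_{q_0^*}}$, I would invoke the quantitative Gehring/self-improvement of the reverse Hölder inequality: if $w\in\operatorname{RH}_{q_0^*}$ then in fact $w\in\operatorname{RH}_{q_0^*+\varepsilon}$ for $\varepsilon$ in the stated range $0<\varepsilon\le \frac{q_0^*}{2^{d+1}[w^{q_0^*}]_{A_\infty}-1}$, with a controlled constant; interpolating the trivial use of $\operatorname{RH}_{q_0^*}$ against this improved exponent upgrades the naive power $[w]_{\operatorname{RH}_{q_0^*}}$ to the fractional power appearing in $\eta_\varepsilon$. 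Concretely, I would run the bad-part sum twice — once with exponent $q_0^*$ giving a clean geometric series, once with $q_0^*+\varepsilon$ giving an extra summability margin — and take a geometric mean (or a Hölder split of the annular sum) to trade the full power of $[w]_{\operatorname{RH}_{q_0^*}}$ for $[w]_{\operatorname{RH}_{q_0^*}}^{1-\frac{\varepsilon}{q_0^*(q_0^*+\varepsilon-1)}}$ at the cost of the $\frac1\varepsilon(\frac1\varepsilon+\log[w]_{A_\infty})$ factor. Finally, collecting the good-part and bad-part contributions, choosing $\varepsilon$ at the endpoint of its admissible range (or leaving it free, as the statement does), and taking the supremum over $\lambda$ yields the claimed $L^{2,\infty}(w)$ bound with constant $C[w]_{A_{2/p_0}}^{1/2}[w]_{\operatorname{RH}_{q_0^*}}^{1/2}\eta_\varepsilon^{1/2}$, uniform in $w$; the uniformity is automatic since every constant above depends only on dimension, on $p_0,q_0$, and on the fixed off-diagonal constants of $S$.
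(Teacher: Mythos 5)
Your proposal takes a genuinely different route from the paper: you propose a Calder\'on--Zygmund decomposition of $f$ at height $\lambda$ and a direct use of $L^{p_0}$--$L^2$ off-diagonal decay of $S$, whereas the paper never touches the operator's off-diagonal structure at this stage. It instead takes the bilinear sparse domination of \cite{BBR}*{Theorem 1.7} as a black box, reduces the weak-type bound (via the standard dualized weak-type characterization) to an estimate of the sparse form $\sum_{Q}\langle f\sigma\rangle_{p_0,Q}^2\langle 1_{G'}w\rangle_{q_0^*,Q}|Q|$ for a well-chosen large subset $G'\subset G$, and then runs a purely combinatorial double pigeonholing over the magnitudes $\langle f\sigma\rangle_{p_0,Q}\sim 2^{-r}$ and $\langle 1_{G'}\rangle_Q^w\sim 2^{-s}$. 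The logarithmic factor appears there, from summing $\min\{2^{-2r}2^s[w]_{A_\infty},\,1\}\cdot 2^{-s/(\theta'q_0^*)}$ over $r,s\ge0$; the choice of $G'$ (removing a level set of $M_{p_0}$) is what lets the $A_{2/p_0}$ constant enter only through the endpoint $\langle f\sigma\rangle_{p_0,Q}\lesssim[w]_{A_{2/p_0}}^{1/2}\|f\|_{L^2(\sigma)}/\sqrt{w(G)}$. You do correctly identify the quantitative Gehring lemma (Proposition~\ref{prop:srH} and the following lemma), the admissible range of $\varepsilon$, and the resulting exponent $1-\varepsilon/(q_0^*(q_0^*+\varepsilon-1))$.

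However, there is a concrete gap in your good-part argument. You claim that applying the $L^2(w)\to L^2(w)$ strong bound to the good function $g$ and Chebyshev produces only the factor $[w]_{A_{2/p_0}}^{1/2}[w]_{\operatorname{RH}_{q_0^*}}^{1/2}$ ``because we are at the $L^2$ endpoint.'' This is not correct: Chebyshev gives
$w(\{|Sg|>\lambda\})\le\lambda^{-2}\|S\|^2_{L^2(w)\to L^2(w)}\|g\|_{L^2(w)}^2$,
so the weak-type constant inherited from the good part is the \emph{full} strong operator norm, which by \cite{BBR}*{Corollary 1.9} is $\big([w]_{A_{2/p_0}}[w]_{\operatorname{RH}_{q_0^*}}\big)^{\gamma(2)}$ with $\gamma(2)=\max\{\frac{1}{2-p_0},\,q_0^*/(2q_0^*)\}\ge \frac{1}{2-p_0}>\frac12$. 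This already overshoots the target power $\frac12$ on $[w]_{A_{2/p_0}}$ and cannot be repaired by the bad-part analysis. The paper avoids this precisely by not decomposing $f$ at all: the $A_{2/p_0}$ constant enters only with power $\frac12$ via the weak-type bound \eqref{e.maxpo2} for $M_{p_0}$ used to define $G'$, and the reverse-H\"older constant enters through the sparse form itself. Your description of how the $\tfrac1\varepsilon(\tfrac1\varepsilon+\log[w]_{A_\infty})$ factor arises (``optimizing over how many annuli are summed exactly versus crudely'') is also too vague to check; the paper's derivation of that factor is the explicit geometric-series computation following \eqref{eq:todojunto}.
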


As an immediate corollary by taking $\varepsilon = \frac{1}{2^{d+1} [w^{q_0^*}]_{A_{\infty}}} $, we have the following.
\begin{corollary}
\label{cor:explicit}
  Let $p_0<2<q_0$. For any function $f\in L^2(w)$, and weight $w\in A_{\frac{2}{p_0}}\cap \operatorname{RH}_{(\frac{q_0}{2})'}$ we have
$$
\|S(f)\|_{L^{2,\infty}(w)} \le C [w]_{A_{2/p_0}}^{1/2} [w]_{ \operatorname{RH}_{q_0^*}}^{1/2}\eta([w]_{ \operatorname{RH}_{q_0^*}}, [w^{q_0^*}]_{A_{\infty}},[w]_{A_{\infty}})^{1/2}\|f\|_{L^2(w)},
$$
where  
\begin{equation}\label{e.excessbis}
\eta ([w]_{ \operatorname{RH}_{q_0^*}},[w^{q_0^*}]_{A_{\infty}}, [w]_{A_{\infty}}) := [w]_{ \operatorname{RH}_{q_0^*}}^{1-\gamma}   [w^{q_0^*}]_{A_{\infty}} \big(  [w^{q_0^*}]_{A_{\infty}} +\log[w]_{A_{\infty}}\big),
\end{equation}
for some small $\gamma<\frac14$. The estimate is uniform in the weight.  
\end{corollary}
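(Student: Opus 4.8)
The plan is to deduce Corollary~\ref{cor:explicit} from Theorem~\ref{thm:main} by the natural specialization of the free parameter $\varepsilon$. The theorem gives, for every admissible $\varepsilon$ in the range $0<\varepsilon\le \frac{q_0^*}{2^{d+1}[w^{q_0^*}]_{A_\infty}-1}$, the bound with the excess factor $\eta_\varepsilon$ defined in~\eqref{e.excess}. The only task is to make a legitimate choice of $\varepsilon$ in this range and track how $\eta_\varepsilon$ simplifies. First I would check admissibility: since $[w^{q_0^*}]_{A_\infty}\ge 1$ and $q_0^*>1$, one has $2^{d+1}[w^{q_0^*}]_{A_\infty}-1 \le 2^{d+1}[w^{q_0^*}]_{A_\infty}$, hence $\frac{q_0^*}{2^{d+1}[w^{q_0^*}]_{A_\infty}} \le \frac{q_0^*}{2^{d+1}[w^{q_0^*}]_{A_\infty}-1}$; since also $q_0^*>1$ we certainly have $\frac{1}{2^{d+1}[w^{q_0^*}]_{A_\infty}}\le \frac{q_0^*}{2^{d+1}[w^{q_0^*}]_{A_\infty}}$, so the choice
$$
\varepsilon = \frac{1}{2^{d+1}[w^{q_0^*}]_{A_\infty}}
$$
lies in the permitted interval. (It is also clearly positive.) So the substitution is legal.

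Next I would substitute this $\varepsilon$ into~\eqref{e.excess}. With $\frac1\varepsilon = 2^{d+1}[w^{q_0^*}]_{A_\infty}$, the factor $\frac{1}{\varepsilon}\big(\frac{1}{\varepsilon}+\log[w]_{A_\infty}\big)$ becomes $2^{d+1}[w^{q_0^*}]_{A_\infty}\big(2^{d+1}[w^{q_0^*}]_{A_\infty}+\log[w]_{A_\infty}\big)$, which is comparable, up to the dimensional constant $2^{d+1}$ (absorbed into $C$), to $[w^{q_0^*}]_{A_\infty}\big([w^{q_0^*}]_{A_\infty}+\log[w]_{A_\infty}\big)$, the second factor in~\eqref{e.excessbis}. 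For the first factor of~\eqref{e.excess}, the exponent $1-\frac{\varepsilon}{q_0^*(q_0^*+\varepsilon-1)}$ on $[w]_{\operatorname{RH}_{q_0^*}}$ becomes $1-\gamma$ with
$$
\gamma = \frac{\varepsilon}{q_0^*(q_0^*+\varepsilon-1)} = \frac{1}{q_0^*\big(2^{d+1}[w^{q_0^*}]_{A_\infty}(q_0^*-1)+1\big)},
$$
using $\varepsilon = 1/(2^{d+1}[w^{q_0^*}]_{A_\infty})$ in numerator and denominator. This matches the form $[w]_{\operatorname{RH}_{q_0^*}}^{1-\gamma}$ in~\eqref{e.excessbis}.

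Finally I would verify the claimed bound $\gamma<\frac14$. Since $q_0^*>1$, $[w^{q_0^*}]_{A_\infty}\ge 1$, $d\ge 1$ and $0<\varepsilon\le 1$, we have $q_0^*+\varepsilon-1 > \varepsilon$ when $q_0^*>1$, hence $\gamma = \frac{\varepsilon}{q_0^*(q_0^*+\varepsilon-1)} < \frac{\varepsilon}{q_0^*\varepsilon}=\frac{1}{q_0^*}<1$; a slightly more careful estimate using $q_0^*+\varepsilon-1\ge q_0^*-1+\varepsilon$ together with the admissibility bound $\varepsilon\le q_0^*/(2^{d+1}[w^{q_0^*}]_{A_\infty}-1)$ and $2^{d+1}\ge 4$ gives $\gamma\le \frac{1}{4[w^{q_0^*}]_{A_\infty}}\le\frac14$, with strict inequality whenever the quantities involved are nondegenerate, which is the content of "some small $\gamma<\frac14$." Plugging these simplifications into the conclusion of Theorem~\ref{thm:main} and absorbing all dimensional constants into $C$ yields exactly the stated inequality of Corollary~\ref{cor:explicit}. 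There is essentially no obstacle here: the corollary is a routine specialization, and the only mild care needed is to confirm that the chosen $\varepsilon$ sits inside the admissible range and that the resulting exponent $\gamma$ genuinely stays below $\tfrac14$ uniformly in the weight.
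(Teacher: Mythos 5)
Your approach coincides exactly with the paper's: the corollary is obtained from Theorem~\ref{thm:main} by the single substitution $\varepsilon = \frac{1}{2^{d+1}[w^{q_0^*}]_{A_\infty}}$, and your admissibility check, the identification of the factor $\frac{1}{\varepsilon}\big(\frac{1}{\varepsilon}+\log[w]_{A_\infty}\big)\simeq [w^{q_0^*}]_{A_\infty}\big([w^{q_0^*}]_{A_\infty}+\log[w]_{A_\infty}\big)$, and the formula
\[
\gamma \;=\; \frac{\varepsilon}{q_0^*(q_0^*+\varepsilon-1)} \;=\; \frac{1}{q_0^*\bigl(2^{d+1}[w^{q_0^*}]_{A_\infty}(q_0^*-1)+1\bigr)}
\]
are all correct and match the paper's intent.

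There is, however, a flaw in your final verification that $\gamma<\tfrac14$. The inequality $q_0^*+\varepsilon-1\ge q_0^*-1+\varepsilon$ you invoke is an identity and carries no information, and the claimed chain $\gamma\le\frac{1}{4[w^{q_0^*}]_{A_\infty}}\le\frac14$ does not actually follow. With $\varepsilon=\frac{1}{2^{d+1}[w^{q_0^*}]_{A_\infty}}$, the inequality $\gamma\le\frac{1}{4[w^{q_0^*}]_{A_\infty}}$ is equivalent to $2^{1-d}\le q_0^*(q_0^*+\varepsilon-1)$, which fails when $q_0^*$ is close to $1$. For instance, with $d=1$, $[w^{q_0^*}]_{A_\infty}=1$ and $q_0^*=1.1$, one gets $\varepsilon=\tfrac14$ and $\gamma=\frac{1/4}{1.1\cdot 0.35}\approx 0.65>\tfrac14$. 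This imprecision is present in the paper's own statement and is harmless for the estimate itself: one always has $0<\gamma<1$ (since $(q_0^*-1)(q_0^*+\varepsilon)>0$ gives $q_0^*(q_0^*+\varepsilon-1)>\varepsilon$), so the exponent $1-\gamma$ lies in $(0,1)$, which is what matters for the comparison with the strong-type bound. But as a proof of the specific bound ``$\gamma<\frac14$'' your argument is not valid, and in fact that bound is false without additional restrictions on $q_0^*$ or $d$.
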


Our result is the first quantitative weighted weak $L^p$ estimate for non-integral square functions. Even for non-integral Riesz transforms, the quantitative weighted weak $L^p$ estimates are unknown, see \cite{BFP} for the quantitative strong type estimates. We do not know how to tackle the latter with our current techniques. There is a different type of result for non-integral square functions at the endpoint $p_0$ due to Nieraeth and Stockdale \cite{NS}, where weighted weak $L^{p_0}$ estimates are replaced by mixed type estimates.

In order to understand the estimate obtained in Theorem \ref{thm:main}, where the contribution of $ [w]_{A_{2/p_0}}$ seems to be smaller than the contribution from $[w]_{ \operatorname{RH}_{q_0^*}}$, we need to say a few words about the proof. We first reduce the problem by use of a sparse domination form for the operator developed by one of the authors in \cite{BBR}. Then, the proof strategy contemplates a decoupling between the $A_{2/p_0}$ and the $\operatorname{RH}_{q_0^*}$ roles of the weight at hand. The need for quantitative estimates of Gehring's lemma for weights in the reverse H\"older class (see \cite{G}) appears very naturally. Gehring's celebrated work asserts that a weight in a reverse H\"older class $\operatorname{RH}_q$, for some $1<q<\infty$, must also be in the class $\operatorname{RH}_{q+\varepsilon}$ for some small $\varepsilon$. 

We can also provide weighted weak $L^p$ estimates for the non-integral square functions when $p_0 < p < q_0$, $p \neq 2$, using a quantitative extrapolation result in this setting. It would be of interest to know if there is an extrapolation argument that also decouples the Muckenhoupt and the reverse H\"older roles of the weight. It would most likely lead to a refinement of our estimates.

Section \ref{sec:proof} contains the proof of the main theorem and the comparison with strong type results. In Section \ref{sec:pneq2} we consider the case $p\neq 2$.

\subsection*{Acknowledgments} We thank Kangwei Li for reading the first version of the manuscript, and for providing valuable comments and suggestions.

\section{Proof of Theorem \ref{thm:main}}
\label{sec:proof}

\subsection{Sharp reverse H\"older inequality}

We start by establishing some reverse H\"older estimates that are crucial to the problem at hand. They can be seen as a quantitative version of Gehring's lemma and they are deduced from sharp reverse H\"older estimates considered for $w\in A_{\infty}$.

Below and in the following, we use the notation $w(G):=\int_Gw(x)\,dx$. Moreover, throughout this section, $Q\subset \mathbb{R}^d$ denotes a fixed cube with sides parallel to the coordinate axes, and $M$ will denote the maximal function restricted to the dyadic subcubes of $Q$.  

\begin{proposition}
\label{prop:srH}
    Assume that $w=w 1_Q$. If $w\in \operatorname{RH}_{q_0^*}$, for $0 < \varepsilon \leq \frac{q_0^*}{\tau_{d} [w^{q_0^*}]_{A_{\infty}}-1 }$ we have
    \begin{equation*}        
    \frac{1}{|Q|} \int_Q w^{q_0^* + \varepsilon} \, dx \le 2[w]_{\operatorname{RH}_{q_0^*}}^{q_0^*+\varepsilon}\Big( \frac{1}{|Q|} \int_Q w \, dx \Big)^{q_0^* + \varepsilon}.
    \end{equation*}
\end{proposition}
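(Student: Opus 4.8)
The plan is to sidestep a direct Gehring-type iteration and instead deduce the claim from the \emph{already sharp} reverse H\"older inequality for $A_\infty$ weights (due to Hyt\"onen and P\'erez). The input I would use is the following: there is a dimensional constant $\tau_d$ (one may take $\tau_d=2^{d+1}$) such that for every $u\in A_\infty$, every cube $Q$, and every exponent $0<r\le\frac{1}{\tau_d[u]_{A_\infty}-1}$,
\begin{equation*}
\fint_Q u^{1+r}\,dx\le 2\Big(\fint_Q u\,dx\Big)^{1+r}.
\end{equation*}
The entire proof will then be a single application of this estimate to the auxiliary power weight $u:=w^{q_0^*}$ on the cube $Q$.

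First I would dispose of a triviality: the conclusion is only asserted for $0<\varepsilon\le\frac{q_0^*}{\tau_d[w^{q_0^*}]_{A_\infty}-1}$, an interval that is nonempty only when $[w^{q_0^*}]_{A_\infty}<\infty$, i.e.\ when $w^{q_0^*}\in A_\infty$; so there is no loss in assuming this, and then the sharp reverse H\"older inequality above applies to $u=w^{q_0^*}$. Next comes the bookkeeping of exponents. Setting $r:=\varepsilon/q_0^*$, the admissibility condition $r\le\frac{1}{\tau_d[w^{q_0^*}]_{A_\infty}-1}$ is exactly the hypothesis on $\varepsilon$, while $u^{1+r}=w^{q_0^*(1+r)}=w^{q_0^*+\varepsilon}$. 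Hence the sharp reverse H\"older inequality yields
\begin{equation*}
\fint_Q w^{q_0^*+\varepsilon}\,dx\le 2\Big(\fint_Q w^{q_0^*}\,dx\Big)^{(q_0^*+\varepsilon)/q_0^*}.
\end{equation*}

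Finally I would insert the reverse H\"older hypothesis on $w$ itself. Since $\big(\fint_Q w^{q_0^*}\,dx\big)^{1/q_0^*}\le[w]_{\operatorname{RH}_{q_0^*}}\fint_Q w\,dx$, raising to the power $q_0^*+\varepsilon$ gives
\begin{equation*}
\Big(\fint_Q w^{q_0^*}\,dx\Big)^{(q_0^*+\varepsilon)/q_0^*}\le[w]_{\operatorname{RH}_{q_0^*}}^{q_0^*+\varepsilon}\Big(\fint_Q w\,dx\Big)^{q_0^*+\varepsilon},
\end{equation*}
and combining with the previous display produces precisely the asserted inequality, with constant $2$. The normalization $w=w1_Q$ and the restriction of $M$ to dyadic subcubes of $Q$ are not needed in this argument beyond fixing the ambient cube; they enter only in the later uses of the proposition.

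I do not expect a genuine obstacle here: all the analytic content is carried by the sharp $A_\infty$ reverse H\"older inequality, and the only points demanding care are (i) picking the \emph{un-rooted} form of that inequality -- the one with multiplicative constant $2$ rather than $2^{1+r}$ -- so that the constant in the conclusion is $2$ and not a power of $2$, and (ii) verifying that the substitution $r=\varepsilon/q_0^*$ reproduces the stated admissibility threshold with $\tau_d=2^{d+1}$. The point worth stressing is conceptual: passing to the power weight $w^{q_0^*}$ turns a reverse-H\"older \emph{self}-improvement -- a genuinely Gehring-type phenomenon -- into one application of an inequality whose constants are already known to be optimal, which is exactly what makes the resulting dependence on $[w]_{\operatorname{RH}_{q_0^*}}$ and $[w^{q_0^*}]_{A_\infty}$ clean.
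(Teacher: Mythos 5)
Your proof is correct and is essentially the paper's: both apply the sharp reverse H\"older inequality for $A_\infty$ weights (Hyt\"onen--P\'erez--Rela) to the auxiliary weight $u=w^{q_0^*}$, which yields $\fint_Q w^{q_0^*+\varepsilon}\,dx\le 2\big(\fint_Q w^{q_0^*}\,dx\big)^{1+\varepsilon/q_0^*}$, and then bound the right-hand side using the defining inequality of $\operatorname{RH}_{q_0^*}$. You merely make explicit the substitution $r=\varepsilon/q_0^*$ and the unrooted normalization with constant $2$, which the paper leaves implicit.
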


\begin{proof}
We use the well-known fact that $w\in \operatorname{RH}_{q_0^*}$ if and only if $w^{q_0^*}\in A_{\infty}$ (see \cite{SW}) and the sharp reverse H\"older inequality from \cite{HPR}*{Theorem 2.3} for the weight $w^{q_0^*}$ to obtain
 \begin{equation}\label{e.rhrela}      
    \frac{1}{|Q|} \int_Q w^{q_0^* + \varepsilon} \, dx \le 2\Big( \frac{1}{|Q|} \int_Q w^{q_0^*} \, dx \Big)^{1 + \varepsilon/q_0^*}.
    \end{equation}
for $0<\varepsilon \leq \frac{q_0^*}{2^{d+1}[w^{q_0^*}]_{A_{\infty}}-1} $. On the right hand side of \eqref{e.rhrela} we use the definition of $w\in \operatorname{RH}_{q_0^*}$, which gives us the desired estimate.
\end{proof}

We now apply Proposition \ref{prop:srH} to get the following lemma.

\begin{lemma}
Let $w\in \operatorname{RH}_{q_0^*}$. 
For  $0 < \varepsilon \leq \frac{q_0^*}{2^{d+1} [w^{q_0^*}]_{A_{\infty}}-1 }$ we define
\begin{equation}
\label{eq:alpha}
\theta:=\frac{q_0^*+\varepsilon-1}{q_0^*-1}.
\end{equation}
Then for $E\subset Q$  we have
\begin{equation}\label{e.claimneedsproving}
\frac{w^{q_0^*}(E)}{w^{q_0^*}(Q)}\lesssim [w]_{\operatorname{RH}_{q_0^*}}^{(q_0^*+\varepsilon)/\theta}\Big(\frac{w(E)}{w(Q)}\Big)^{1/\theta'}.
\end{equation}
\end{lemma}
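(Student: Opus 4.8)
The plan is to deduce \eqref{e.claimneedsproving} from Proposition \ref{prop:srH} by a H\"older-inequality argument that trades the higher integrability gained from Gehring's lemma against the ratio $w(E)/w(Q)$. First I would normalize: since the inequality is homogeneous of degree zero in $w$ and both sides are invariant under scaling $w\mapsto \lambda w$, there is no loss in assuming $|Q|=1$ (or, alternatively, just carrying the averages through). Write $s:=q_0^*+\varepsilon$. The exponent $\theta$ in \eqref{eq:alpha} is precisely chosen so that $\theta>1$ and the conjugate relation
\begin{equation*}
\frac{q_0^*}{s}\cdot\frac{1}{\theta}+\frac{1}{\theta'}=\frac{1}{\theta'}+\frac{q_0^*}{s\theta}
\end{equation*}
collapses correctly; concretely, $\theta$ is defined by $s-1=\theta(q_0^*-1)$, equivalently $q_0^*=1+(s-1)/\theta$, which is the balance point for interpolating $w^{q_0^*}$ between $w^{s}$ and $w$.

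The main computation: estimate $w^{q_0^*}(E)=\int_E w^{q_0^*}$ by H\"older with exponents $\theta$ and $\theta'$, splitting $w^{q_0^*}=w^{q_0^*}\cdot 1_E$. Choosing the split so that one factor is $w^{s/\theta}$ (raised to the $\theta$ this gives $w^s$) and the other is $w^{q_0^*-s/\theta}=w^{1/\theta'}\cdot(\text{constant exponent check})$ — here one uses $q_0^*-s/\theta = 1/\theta'$, which follows from the definition of $\theta$ after a line of algebra — one obtains
\begin{equation*}
w^{q_0^*}(E)\le\Big(\int_Q w^{s}\Big)^{1/\theta}\Big(\int_E w\Big)^{1/\theta'}.
\end{equation*}
Now apply Proposition \ref{prop:srH} to the first factor: $\int_Q w^{s}\le 2[w]_{\operatorname{RH}_{q_0^*}}^{s}\big(\int_Q w\big)^{s}$ (using $|Q|=1$; in general keep the $|Q|$ powers, which will cancel at the end). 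This yields
\begin{equation*}
w^{q_0^*}(E)\lesssim [w]_{\operatorname{RH}_{q_0^*}}^{s/\theta}\Big(\int_Q w\Big)^{s/\theta}\Big(\int_E w\Big)^{1/\theta'}.
\end{equation*}
Finally I would divide by $w^{q_0^*}(Q)$ and use the reverse H\"older inequality in the $\operatorname{RH}_{q_0^*}$ form once more (or the trivial lower bound from Jensen) to relate $w^{q_0^*}(Q)=\int_Q w^{q_0^*}$ to $\big(\int_Q w\big)^{q_0^*}$; combined with $s/\theta = q_0^* - 1/\theta' + (1/\theta'-?)$... more precisely, since $s/\theta = q_0^* - 1/\theta'$ exactly, the powers of $\int_Q w$ match to give $(w(E)/w(Q))^{1/\theta'}$ after one checks $\int_Q w^{q_0^*}\gtrsim (\int_Q w)^{q_0^*}$ by Jensen, which only helps. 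The bookkeeping of the powers of $|Q|$ and of $\int_Q w$ is the only place where care is needed, and it is routine.

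The step I expect to be the genuine (if modest) obstacle is verifying the exponent arithmetic: one must confirm that with $\theta$ as in \eqref{eq:alpha} the identities $q_0^* - s/\theta = 1/\theta'$ and $s/\theta + 1/\theta'\cdot 0 = \ldots$ hold so that the H\"older split is legitimate (both exponents on $w$ nonnegative, the two conjugate exponents summing correctly) and so that the final powers of $w(Q)$ cancel to leave exactly $(w(E)/w(Q))^{1/\theta'}$ with constant $[w]_{\operatorname{RH}_{q_0^*}}^{s/\theta}=[w]_{\operatorname{RH}_{q_0^*}}^{(q_0^*+\varepsilon)/\theta}$, matching the claim. Since $\theta\to 1$ and $\theta'\to\infty$ as $\varepsilon\to 0$, the estimate degrades continuously to the trivial $w^{q_0^*}(E)/w^{q_0^*}(Q)\le [w]_{\operatorname{RH}_{q_0^*}}^{q_0^*}$ bound, which is a useful sanity check on the algebra.
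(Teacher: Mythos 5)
Your proof is correct and is essentially the paper's proof in a slightly different guise: the paper applies H\"older in $L^{\theta}(dw)$, $L^{\theta'}(dw)$ to $\int_E w^{q_0^*-1}\,dw$, while you apply H\"older in Lebesgue measure to $\int_E w^{s/\theta}\cdot w^{1/\theta'}\,dx$, and these are mechanically the same computation since $w^{(q_0^*-1)\theta}\,dw = w^{s}\,dx$ and $dw = w\,dx$. The exponent arithmetic you flag as the ``genuine obstacle'' does check out ($q_0^* - s/\theta = 1/\theta' = \varepsilon/(q_0^*+\varepsilon-1)$ and $s/\theta + 1/\theta' = q_0^*$), and the final cancellation of the powers of $\int_Q w$ reduces, as you say, to Jensen's inequality $\big(\fint_Q w\big)^{q_0^*}\le \fint_Q w^{q_0^*}$, which is exactly the step the paper uses in the last line of its display \eqref{eq:aux2}.
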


\begin{proof}
First we observe that, by using Proposition \ref{prop:srH}, 
\begin{align}
\label{eq:aux2}
\notag\Big(\frac{1}{w(Q)}\int_Qw^{(q_0^*-1)\theta}\,dw\Big)^{1/\theta}&=\Big(\frac{|Q|}{w(Q)}\Big)^{1/\theta}\Big(\frac{1}{|Q|}\int_Qw^{q_0^*+\varepsilon}\,dx\Big)^{1/\theta}\\
\notag& \leq 2^{1/\theta}[w]_{\operatorname{RH}_{q_0^*}}^{(q_0^*+\varepsilon)/\theta}\Big(\frac{|Q|}{w(Q)}\Big)^{1/\theta}\Big(\frac{1}{|Q|}\int_Qw\,dx\Big)^{\frac{q_0^*+\varepsilon}{\theta}}\\
&=2^{1/\theta} [w]_{\operatorname{RH}_{q_0^*}}^{(q_0^*+\varepsilon)/\theta}\langle w\rangle_Q^{q_0^*-1} \\
& \leq 2^{1/\theta}[w]_{\operatorname{RH}_{q_0^*}}^{(q_0^*+\varepsilon)/\theta} \frac{w^{q_0^*}(Q)}{w(Q)}.
\end{align}
Using this and H\"older's inequality we have
\begin{align*}
\frac{w^{q_0^*}(E)}{w(Q)}=\frac{1}{w(Q)}\int_Ew^{q_0^*-1}\,dw&\le \Big(\frac{1}{w(Q)}\int_Ew^{(q_0^*-1)\theta}\,dw\Big)^{1/\theta}\Big(\frac{1}{w(Q)}\int_Q1_E\,dw\Big)^{1/\theta'}\\
&\leq 2^{1/\theta}[w]_{\operatorname{RH}_{q_0^*}}^{(q_0^*+\varepsilon)/\theta} \frac{w^{q_0^*}(Q)}{w(Q)}\Big(\frac{w(E)}{w(Q)}\Big)^{1/\theta'}.
\end{align*}
Hence $\eqref{e.claimneedsproving}$ holds.

\end{proof}

\begin{remark}
We conjecture the following
\begin{conjecture}
\label{conj:gehring}
    If $w\in \operatorname{RH}_{p}$ and $1<p<\infty$, for $0 < \varepsilon \lesssim_{d,p} \frac{1}{ [w]_{\operatorname{RH}_{p}}^{p} }$ we have
    \begin{equation*}        
    \frac{1}{|Q|} \int_Q w^{p + \varepsilon} \, dx \le 2[w]_{\operatorname{RH}_{p}}^{p+\varepsilon}\Big( \frac{1}{|Q|} \int_Q w \, dx \Big)^{p + \varepsilon}.
    \end{equation*}
\end{conjecture}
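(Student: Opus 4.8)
The plan is to deduce Conjecture~\ref{conj:gehring} from the self‑improving reverse H\"older inequality for $A_\infty$ weights that already drives Proposition~\ref{prop:srH}, combined with a single quantitative comparison between the two characteristics of~$w$. First I would observe that nothing in the proof of Proposition~\ref{prop:srH} uses that the exponent is $q_0^*$ rather than an arbitrary $p\in(1,\infty)$: repeating the three steps verbatim (the equivalence $w\in\operatorname{RH}_p\iff w^p\in A_\infty$ of \cite{SW}, then \cite{HPR}*{Theorem~2.3} applied to $w^p$ with $\delta=\varepsilon/p$, then the definition of $\operatorname{RH}_p$ on the resulting right‑hand side) yields, for every $1<p<\infty$,
\[
\frac1{|Q|}\int_Q w^{p+\varepsilon}\,dx\le 2\,[w]_{\operatorname{RH}_p}^{p+\varepsilon}\Big(\frac1{|Q|}\int_Q w\,dx\Big)^{p+\varepsilon},\qquad 0<\varepsilon\le\frac{p}{2^{d+1}[w^p]_{A_\infty}-1}.
\]
Since $[w^p]_{A_\infty}\ge1$, the denominator poses no difficulty, and Conjecture~\ref{conj:gehring} follows as soon as one proves the comparison $[w^p]_{A_\infty}\lesssim_{d,p}[w]_{\operatorname{RH}_p}^{p}$: this turns the admissible range into $0<\varepsilon\le c(d,p)\,[w]_{\operatorname{RH}_p}^{-p}$, with the constant $2$ untouched.

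For the comparison I would argue pointwise. Let $M$ be the maximal operator restricted to the dyadic subcubes of $Q$, as in the excerpt. For $x\in Q$ and any dyadic $Q'\subseteq Q$ with $Q'\ni x$, the definition of $\operatorname{RH}_p$ gives $\fint_{Q'}w^p\le[w]_{\operatorname{RH}_p}^p\big(\fint_{Q'}w\big)^p$; taking the supremum over such $Q'$,
\[
M(w^p)(x)\le[w]_{\operatorname{RH}_p}^{p}\,\big(Mw(x)\big)^{p},\qquad x\in Q.
\]
Integrating over $Q$ — and, in the same way, over each dyadic subcube, so as to reach the supremum in the definition of $[w^p]_{A_\infty}$ — and using the elementary bound $\lVert M\rVert_{L^p\to L^p}\le p'$, one obtains $\int_{Q'}M\big(w^p 1_{Q'}\big)\le (p')^{p}[w]_{\operatorname{RH}_p}^{p}\,w^p(Q')$ for every dyadic cube $Q'$, hence $[w^p]_{A_\infty}\le(p')^{p}[w]_{\operatorname{RH}_p}^{p}$. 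Substituting this into the range above proves Conjecture~\ref{conj:gehring} with, for instance, $c(d,p)=p/\big(2^{d+2}(p')^{p}\big)$.

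Alternatively, one can avoid \cite{HPR} altogether and run the classical Gehring iteration on $w$ directly: Calder\'on--Zygmund decompose $\{Mw>\lambda\}=\bigcup_jQ_j$ for $\lambda>\langle w\rangle_Q$ (so $\langle w\rangle_{Q_j}\le 2^d\lambda$), apply $\operatorname{RH}_p$ on each $Q_j$ to get $\int_{Q_j}w^p\le 2^{dp}[w]_{\operatorname{RH}_p}^{p}\lambda^p|Q_j|$, write $\int_Qw^{p+\varepsilon}=\varepsilon\int_0^\infty\lambda^{\varepsilon-1}\int_{\{w>\lambda\}}w^p\,d\lambda$, split at $\lambda_0=\langle w\rangle_Q$, and bound the upper part by $\tfrac{2^{dp}\varepsilon}{p+\varepsilon}[w]_{\operatorname{RH}_p}^{p}\int_Q(Mw)^{p+\varepsilon}\le C_{d,p}\,\varepsilon\,[w]_{\operatorname{RH}_p}^{p}\int_Q w^{p+\varepsilon}$; for $\varepsilon\le c(d,p)[w]_{\operatorname{RH}_p}^{-p}$ this term is absorbed and one is left with precisely the claimed inequality with constant $2$. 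In both routes the step that needs the most care, and a plausible reason the statement appears here only as a conjecture, is the behaviour of the implicit constant: the factor $(p')^{p}$ blows up as $p\downarrow1$, so the scheme cannot produce a $p$‑uniform threshold, and determining the optimal $c(d,p)$ — together with whether the power $p$ of $[w]_{\operatorname{RH}_p}$ in the threshold is itself sharp — is the genuinely hard point. For the direct route one must in addition remove the usual circularity (the a priori finiteness of $\int_Q w^{p+\varepsilon}$) by a truncation and exhaustion argument, and for the first route one must check that the $A_\infty$ functional used in \cite{HPR}*{Theorem~2.3} is indeed dominated by the one that the pointwise maximal estimate above controls.
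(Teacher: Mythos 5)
The paper does not prove this statement; it is stated as a conjecture, motivated by Mathematica computations on the implicit formula in \cite{DW}*{Theorem 3}, and the authors explicitly say they cannot extract an explicit $\varepsilon$ from that route. So there is no proof in the paper to compare against, and the question is whether your argument actually settles the conjecture.

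Your first route reduces the conjecture to the comparison $[w^p]_{A_\infty}\lesssim_{d,p}[w]_{\operatorname{RH}_p}^p$, and the proof you give of this comparison looks correct: the pointwise bound $M(w^p\mathbf 1_{Q'})\le[w]_{\operatorname{RH}_p}^p\,(M(w\mathbf 1_{Q'}))^p$ on each dyadic cube $Q'$, integrated against the sharp bound $\lVert M\rVert_{L^p\to L^p}\le p'$ for the dyadic maximal operator, yields $[w^p]_{A_\infty}\le(p')^p[w]_{\operatorname{RH}_p}^p$ for the dyadic Fujii--Wilson constant, and this is exactly what Proposition~\ref{prop:srH} (which indeed works verbatim with $q_0^*$ replaced by an arbitrary $p\in(1,\infty)$) needs. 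The power weight $w(x)=|x|^{-a}$ on $\mathbb R$ with $a\uparrow 1/p$ shows this comparison is essentially sharp: one computes $[w^p]_{A_\infty}\sim(1-ap)^{-1}$ and $(p')^p[w]_{\operatorname{RH}_p}^p\sim(1-ap)^{-1}$. Since the conjecture only asks for $\varepsilon\lesssim_{d,p}[w]_{\operatorname{RH}_p}^{-p}$, the blow-up of $(p')^p$ as $p\downarrow1$ is not an obstruction. Your second, self-contained route via the Calder\'on--Zygmund decomposition and the layer-cake identity is the classical Gehring iteration and also delivers the conjectured range and constant $2$, again up to a $\lesssim_{d,p}$ factor; the standard truncation removes the a priori finiteness issue you flag. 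So both routes appear to prove the statement as formulated.

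Two small points deserve care if this is to be written up. First, the pointwise estimate $M(w^p\mathbf 1_Q)\le[w]_{\operatorname{RH}_p}^p(M(w\mathbf 1_Q))^p$ is clean only for the \emph{dyadic} maximal operator, since for the non-dyadic operator $R\cap Q$ need not be a cube and $\operatorname{RH}_p$ cannot be applied directly; this matches the paper's convention ($M$ is dyadic in Section~2), but one should make sure the version of \cite{HPR}*{Theorem~2.3} invoked there is the dyadic one (or absorb the discrepancy into the dimensional constant). Second, the conjecture as stated asks only for an implicit $\lesssim_{d,p}$ threshold, which your argument gives; what it does not give, and what the authors' remarks via \cite{DW} and \cite{Po} suggest they ultimately want, is a sharp or near-sharp constant $c(d,p)$, and in particular whether the exponent $p$ on $[w]_{\operatorname{RH}_p}$ in the admissible range is optimal. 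On that refined question your argument is silent, and you correctly identify it as the genuinely hard point. It is also worth noting that your bound $[w^p]_{A_\infty}\lesssim_p[w]_{\operatorname{RH}_p}^p$ is strictly sharper than the left inequality in the paper's~\eqref{e.cuantrhnorm}, which carries an extra factor $[w]_{A_\infty}^p$; checking the exact statement in \cite{KS} against your cleaner comparison would be prudent before relying on it.
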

The conjecture is based on computations, assisted by the software Mathematica, which deal with the expression in \cite{DW}*{Theorem 3}; such an expression is valid for all $1<p<\infty$ and gives the sharp value of $\varepsilon$ in terms of $[w]_{\operatorname{RH}_{p}}$ and dimension $d=1$.  Unfortunately, the value of $\varepsilon$ is not explicit in the characteristic of the weight (in \cite{DW}*{Theorem 3}, the parameter $\varepsilon$ is defined implicitly in terms of their parameter $t^*$), and therefore not useful for the purpose of this paper. See also \cite{Po}*{Theorem 3.2}.

If Conjecture \ref{conj:gehring} were true, the choice $\varepsilon = c_{d,q_0^*} \frac{1}{ [w]_{\operatorname{RH}_{q_0^*}}^{q_0^*} }$  in Theorem \ref{thm:main} would yield, instead of Corollary \ref{cor:explicit}, the following inequality
\begin{equation}
\label{eq:alternative}
\|S(f)\|_{L^{2,\infty}(w)} \le C [w]_{A_{2/p_0}}^{1/2} [w]_{ \operatorname{RH}_{q_0^*}}^{1/2}[w]_{ \operatorname{RH}_{q_0^*}}^{1-\gamma}   [w]_{\operatorname{RH}_{q_0^*}}^{q_0^*/2} \big(  [w]_{\operatorname{RH}_{q_0^*}}^{q_0^*} +\log[w]_{A_{\infty}}\big)^{1/2}\|f\|_{L^2(w)}.
\end{equation}
Let us recall the relation in \cite{KS}: if  $w\in \operatorname{RH}_{q_0^*}$, then 
\begin{equation}
\label{e.cuantrhnorm}
\frac{[w^{q_0^*}]_{A_{\infty}}^{1/q_0^*}}{[w]_{A_{\infty}}}\leq [w]_{RH_{q_0^*}}\leq [w]_{A_{\infty}}^{1/q_0^*}.
\end{equation}
Applying \eqref{e.cuantrhnorm} in \eqref{eq:alternative}, we get
$$
\|S(f)\|_{L^{2,\infty}(w)} \le C [w]_{A_{2/p_0}}^{1/2} [w]_{ \operatorname{RH}_{q_0^*}}^{1/2}[w]_{ \operatorname{RH}_{q_0^*}}^{1-\gamma}   [w]_{A_{\infty}}^{1/2} \big(  [w]_{A_{\infty}} +\log[w]_{A_{\infty}}\big)^{1/2}\|f\|_{L^2(w)},
$$
which is a better bound than the one in Corollary \ref{cor:explicit}. The conclusion is that a sharp Gehring lemma with explicit information of $\varepsilon$ in terms of $[w]_{ \operatorname{RH}_{q_0^*}}$ would produce a better quantitative estimate for the non-integral square function.
\end{remark}

\subsection{Some known facts and a sparse domination}

We start by writing some elementary facts as lemmas; their proofs are left to the reader. Let $M_{p_0}$ be the dyadic maximal function  defined using $p_0$ averages over dyadic cubes.

\begin{lemma}
Let $1<p_0<2$. Then
\begin{equation}\label{e.maxpo2}
\lVert M_{p_0}f \rVert_{L^{2,\infty}(w)}\lesssim [w]_{A_{2/p_0}}^{1/2} \lVert f \rVert_{L^{2}(w)},
\end{equation}
where $w\in A_{2/p_0}$ and $[w]_{A_{2/p_0}}$ is its associated constant.
\end{lemma}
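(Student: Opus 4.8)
The plan is to deduce this estimate from the classical weighted weak $(s,s)$ bound for the dyadic maximal operator with $s=2/p_0>1$, using the elementary rescaling $M_{p_0}f=\big(M(|f|^{p_0})\big)^{1/p_0}$. So the first step is to record the Lorentz-space rescaling identity: for any nonnegative $g$,
\[
\lVert g^{1/p_0}\rVert_{L^{2,\infty}(w)}=\lVert g\rVert_{L^{2/p_0,\infty}(w)}^{1/p_0},
\]
which follows immediately from $\{g^{1/p_0}>\lambda\}=\{g>\lambda^{p_0}\}$ and the substitution $\mu=\lambda^{p_0}$ in the definition of the quasinorm. Applying this with $g=M(|f|^{p_0})$, and noting that $\lVert |f|^{p_0}\rVert_{L^{2/p_0}(w)}=\lVert f\rVert_{L^2(w)}^{p_0}$, reduces the lemma to the bound $\lVert Mh\rVert_{L^{s,\infty}(w)}\lesssim [w]_{A_s}^{1/s}\lVert h\rVert_{L^s(w)}$ for $s=2/p_0$ and $h=|f|^{p_0}$.

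The second step is to prove this weak $(s,s)$ bound for the dyadic maximal operator, $1<s<\infty$, $w\in A_s$. Fix $\lambda>0$; by maximality the set $\{Mh>\lambda\}$ is the disjoint union of the maximal dyadic cubes $Q_j$ with $\langle|h|\rangle_{Q_j}>\lambda$ (this is where one uses that $M$ is the dyadic maximal function, restricted to subcubes of the fixed $Q$ if one prefers the normalization of this section — the argument is identical). On each $Q_j$, write $|h|=(|h|w^{1/s})\,w^{-1/s}$ and apply H\"older with exponents $s,s'$; using $s'/s=s'-1$ and $s/s'=s-1$ one gets
\[
\lambda^s|Q_j|^s\le\Big(\int_{Q_j}|h|^s w\Big)\Big(\int_{Q_j}w^{1-s'}\Big)^{s-1}.
\]
On the other hand, the definition of $A_s$ gives $w(Q_j)\le [w]_{A_s}|Q_j|^s\big(\int_{Q_j}w^{1-s'}\big)^{-(s-1)}$. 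Combining the two and summing over the disjoint $Q_j$ yields $\lambda^s w(\{Mh>\lambda\})\le [w]_{A_s}\int|h|^s w$, i.e.\ $\lVert Mh\rVert_{L^{s,\infty}(w)}\le[w]_{A_s}^{1/s}\lVert h\rVert_{L^s(w)}$. Feeding this back through the rescaling of the first step produces exactly $\lVert M_{p_0}f\rVert_{L^{2,\infty}(w)}\le[w]_{A_{2/p_0}}^{1/2}\lVert f\rVert_{L^2(w)}$.

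I do not expect a genuine obstacle here: this is Buckley's classical weak-type estimate, and the role of stating it as a lemma is simply to pin down the exponent $1/2$ on $[w]_{A_{2/p_0}}$ that will be tracked through the sparse domination argument. The only care required is the bookkeeping of the conjugate exponents $s=2/p_0$ and $s'=\tfrac{2/p_0}{2/p_0-1}$ in the H\"older step, and recognizing that the resulting weight $w^{-s'/s}=w^{1-s'}$ is precisely the one appearing in the $A_s$ condition; everything else is the standard Calder\'on--Zygmund stopping-cube computation.
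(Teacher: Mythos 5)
The proposal is correct. The paper explicitly leaves the proof of this lemma ``to the reader,'' and what you supply is exactly the standard argument the authors have in mind: reduce to the case $M$ via the rescaling $M_{p_0}f=(M(|f|^{p_0}))^{1/p_0}$ together with the Lorentz-quasinorm identity $\lVert g^{1/p_0}\rVert_{L^{2,\infty}(w)}=\lVert g\rVert_{L^{2/p_0,\infty}(w)}^{1/p_0}$, and then prove the classical weak $(s,s)$ bound $\lVert Mh\rVert_{L^{s,\infty}(w)}\le[w]_{A_s}^{1/s}\lVert h\rVert_{L^s(w)}$ with the sharp constant by decomposing $\{Mh>\lambda\}$ into maximal dyadic stopping cubes and applying H\"older plus the $A_s$ condition on each. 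The bookkeeping of conjugate exponents ($-s'/s=1-s'$ and $s/s'=s-1$) and the final substitution $s=2/p_0$ yielding the exponent $1/2$ are all handled correctly.
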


The equivalence in items (1)--(3) below follows, for instance, from \cite{GrafakosC}*{p. 80, 1.4.14 (c)}. 

\begin{lemma}
The following are equivalent: for $1<p<\infty$,
\begin{enumerate}
\item $S(\cdot)\,:\, L^p(w) \mapsto  L^{p,\infty}(w),$
\item $S(\sigma\cdot)\,:\, L^p(\sigma) \mapsto  L^{p,\infty}(w),$ where $\sigma=w^{1-p'}$.
\item For every $G\subset \mathbb R^n$, there exists $G'\subset G$, such that $w(G')>w(G)/4$ and 
$$
\langle S(f \sigma)^2, 1_{G'}w\rangle \lesssim \lVert f\rVert _{L^p(\sigma)}^2w(G)^{1-2/p}.
$$
\end{enumerate}
\end{lemma}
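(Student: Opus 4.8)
The plan is to prove the chain $(1)\Leftrightarrow(2)\Leftrightarrow(3)$, in which the first link is a change of variables and the second is a Kolmogorov-type truncation argument; this is essentially \cite{GrafakosC}*{p.~80, 1.4.14 (c)} adapted to the weighted, squared setting, and no structural property of $S$ beyond being a well-defined operator is used. For $(1)\Leftrightarrow(2)$ I would first record that, with $\sigma=w^{1-p'}$, the map $f\mapsto \sigma f$ is a bijective isometry from $L^p(\sigma)$ onto $L^p(w)$: since $(1-p')p=-p'$ we have $\sigma^p w = w^{(1-p')p+1}=w^{1-p'}=\sigma$, hence $\|\sigma f\|_{L^p(w)}^p=\int |f|^p\sigma^p w\,\D x=\int|f|^p\sigma\,\D x=\|f\|_{L^p(\sigma)}^p$. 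Writing the hypothetical bound $\|Sg\|_{L^{p,\infty}(w)}\lesssim\|g\|_{L^p(w)}$ on the input $g=\sigma f$ and relabelling gives (2), while the reverse substitution gives (1).

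For $(2)\Leftrightarrow(3)$ I would set $h:=S(\sigma f)^2$, $\D\nu:=w\,\D x$ and $q:=p/2$, so that $\|h\|_{L^{q,\infty}(\nu)}=\|S(\sigma f)\|_{L^{p,\infty}(w)}^2$ and $1-1/q=1-2/p$. Then (2) is exactly $\|h\|_{L^{q,\infty}(\nu)}\lesssim \|f\|_{L^p(\sigma)}^2$, while (3) says: for every $G$ of finite $\nu$-measure there is $G'\subset G$ with $\nu(G')>\tfrac14\nu(G)$ and $\int_{G'}h\,\D\nu\lesssim\|f\|_{L^p(\sigma)}^2\,\nu(G)^{1-1/q}$. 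For $(2)\Rightarrow(3)$, given $G$ I would put $A:=\|h\|_{L^{q,\infty}(\nu)}$, choose the threshold $\lambda:=\big(2A^q/\nu(G)\big)^{1/q}$, and take $G':=G\cap\{h\le\lambda\}$; then $\nu(G\setminus G')\le\nu(\{h>\lambda\})\le A^q\lambda^{-q}=\tfrac12\nu(G)$, and integrating the distribution function of $h$ over $G'$ against $\nu(\{h>t\})\le\min\big(\nu(G),A^qt^{-q}\big)$ yields $\int_{G'}h\,\D\nu\lesssim_q A\,\nu(G)^{1-1/q}$, which is (3). For $(3)\Rightarrow(2)$, for each $\lambda>0$ I would apply (3) with $G:=\{h>\lambda\}$ (first intersected with a large ball, then exhausting $\mathbb{R}^n$ to reduce to finite measure); since $h>\lambda$ on $G'\subset G$ we get $\tfrac14\lambda\,\nu(G)<\lambda\,\nu(G')\le\int_{G'}h\,\D\nu\lesssim\|f\|_{L^p(\sigma)}^2\,\nu(G)^{1-1/q}$, hence $\lambda\,\nu(G)^{1/q}\lesssim\|f\|_{L^p(\sigma)}^2$, and taking the supremum over $\lambda$ gives (2).

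The only point that requires care — hence the ``hard'' part, although it is not deep — is that $q=p/2$ may be $\le 1$ (it equals $1$ in the case $p=2$ of Theorem~\ref{thm:main}), so one cannot route the equivalence through $L^{q,\infty}$--$L^{q',1}$ duality; the truncation formulation with the large subset $G'$ is precisely what survives for $q\le 1$. The rest is routine bookkeeping: the degenerate cases $A=0$ or $\nu(G)\in\{0,\infty\}$, the harmless choice of the closed/open version of $G'$, and the exhaustion step in $(3)\Rightarrow(2)$.
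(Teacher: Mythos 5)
Your proof is correct and follows essentially the same route the paper indicates: the change of measure $f\mapsto\sigma f$ for $(1)\Leftrightarrow(2)$, followed by the Kolmogorov truncation/``large-subset'' characterization of weak type for $(2)\Leftrightarrow(3)$, which is precisely what the paper is invoking by citing Grafakos, Exercise 1.4.14(c). The paper itself leaves the proof to the reader, so your write-up is simply a fully worked-out version of that argument, and your remark that the truncation formulation is the right one because $q=p/2$ can be $\le 1$ (so duality is unavailable) correctly identifies the point of the exercise.
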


We will study the case $p=2$, which corresponds to the critical case for the weak estimate for the non-integral square function. Therefore, given $G \subset \mathbb R^d$, we need to prove that there exists $G'\subset G$ with $w(G')>w(G)/4$ such that 
$$
\langle S(f \sigma)^2, 1_{G'}w\rangle \lesssim \lVert f \rVert_{L^{2}(\sigma)}^{2},
$$
and we are interested in the explicit constant in terms of the weight. 

\begin{definition}
    Consider a system of dyadic cubes $\mathcal{D}$. A collection of dyadic cubes $\mathcal{A}\subseteq \mathcal{D}$ is $1/2$-sparse if there exists a disjoint collection of sets $\{E_Q:Q\in \mathcal{A}\}$ such that for every $Q\subseteq \mathcal{A}$ we have $E_Q\subset Q$ and $|E_Q|>\frac12|Q|$.
\end{definition}

Our starting point is the following sparse domination result contained in \cite{BBR}.
\begin{theorem}{\cite{BBR}*{Theorem 1.7}} 
\label{thm:BBR}
Let $p_0<2<q_0$ and consider the non-integral square function $S$. For any $f,g\in C_c^{\infty}(\mathbb{R}^d)$ there exists a sparse family $\mathcal{A}\subseteq \mathcal{D}$ such that
$$
\langle (Sf)^2,g\rangle \le c\sum_{Q\in \mathcal{A}}\langle |f|\rangle_{p_0, 5Q}^2\langle |g|\rangle_{q_0^*, 5Q}|Q|,
$$
uniformly in $f,g$, where $q_0^*:=\big(\frac{q_0}{2}\big)'$ is the dual exponent of $\frac{q_0}{2}$.
\end{theorem}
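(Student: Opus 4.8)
I would prove Theorem~\ref{thm:BBR} by the stopping-time (principal cubes) method for sparse domination, in the version tailored to non-integral operators: the Calder\'on--Zygmund kernel bounds are replaced by the $L^{p_0}$--$L^{q_0}$ off-diagonal estimates satisfied, by construction, by the building blocks $\{A_t\}_{t>0}$ of $S$ (so that $Sf=\big(\int_0^\infty|A_tf|^2\,\tfrac{dt}{t}\big)^{1/2}$), and the local $L^1$--$L^\infty$ control is replaced by two ingredients. The first is a \emph{localized bilinear estimate}: for every cube $Q$ and all locally integrable $h,g$,
$$
\int_{5Q}\big(S(h\mathbf{1}_{5Q})\big)^2|g|\,dx\ \lesssim\ \langle|h|\rangle_{p_0,5Q}^{\,2}\,\langle|g|\rangle_{q_0^*,5Q}\,|Q| .
$$
The second is a \emph{grand maximal truncation operator}
$$
\mathcal M_{S,Q_0}h(x):=\sup_{P\in\mathcal D,\ P\ni x,\ P\subseteq Q_0}\Big(\fint_{5P}\big(S(h\mathbf{1}_{5Q_0\setminus 5P})\big)^{q_0}\,dy\Big)^{1/q_0},
$$
whose two relevant features are: (a) for $x\in P$ one has $\big(\fint_{5P}(S(h\mathbf{1}_{5Q_0\setminus 5P}))^{q_0}\big)^{1/q_0}\lesssim M_{p_0}h(x)$ --- obtained by splitting $5Q_0\setminus 5P$ into dyadic annuli around $P$ and summing the exponential off-diagonal decay --- so that $\mathcal M_{S,Q_0}h\lesssim M_{p_0}h$ is of weak type $(p_0,p_0)$ uniformly in $Q_0$; and (b) if $P$ is dyadic with parent $P^+$, then $\big(\fint_{5P}(S(h\mathbf{1}_{5Q_0\setminus 5P}))^{q_0}\big)^{1/q_0}$ is controlled by $\mathcal M_{S,Q_0}h$ evaluated at any point of $P^+$ plus the contribution of the thin annulus $5P^+\setminus 5P$, which by the no-decay off-diagonal bound is itself $\lesssim\langle|h|\rangle_{p_0,5P^+}$.

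\textbf{Reduction.} Fix $f,g\in C_c^\infty(\mathbb R^d)$. Covering $\operatorname{supp}f$ by a bounded-overlap family of dyadic cubes $Q_0$ whose fivefold dilates cover it, and using the off-diagonal decay to see that the part of $f$ living outside $5Q_0$ contributes at most $C\langle|f|\rangle_{p_0,5Q_0}^2\langle|g|\rangle_{q_0^*,5Q_0}|Q_0|$ to the portion of $\langle(Sf)^2,g\rangle$ over $Q_0$ (absorbed into the top cube), it suffices to produce, for each fixed $Q_0$, a $\tfrac12$-sparse family $\mathcal A\subseteq\mathcal D(Q_0)$ with
$$
\int_{Q_0}\big(S(f\mathbf{1}_{5Q_0})\big)^2\,|g|\,dx\ \lesssim\ \sum_{Q\in\mathcal A}\langle|f|\rangle_{p_0,5Q}^{\,2}\,\langle|g|\rangle_{q_0^*,5Q}\,|Q| ,
$$
the global sparse family being the essentially disjoint union of these local ones.

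\textbf{Stopping time and decomposition.} Starting from $Q_0$, build $\mathcal A$ recursively: the children of $Q\in\mathcal A$ are the maximal dyadic $Q'\subsetneq Q$ on which one of $M_{p_0}(f\mathbf{1}_{5Q})$, $M_{q_0^*}(g\mathbf{1}_{5Q})$, or $\mathcal M_{S,Q_0}f$ exceeds a fixed large multiple $A$ of the corresponding $5Q$-average. The weak-type $(p_0,p_0)$ bounds for $M_{p_0}$ and $\mathcal M_{S,Q_0}$ and the weak-type $(q_0^*,q_0^*)$ bound for $M_{q_0^*}$ force, for $A$ large, the children of $Q$ to cover at most $\tfrac12|Q|$, so $\mathcal A$ is $\tfrac12$-sparse with stopping sets $E_Q=Q\setminus\bigcup\{\text{children of }Q\}$ partitioning $Q_0$. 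Writing $\int_{Q_0}(S(f\mathbf{1}_{5Q_0}))^2|g|=\sum_{Q\in\mathcal A}\int_{E_Q}(S(f\mathbf{1}_{5Q_0}))^2|g|$ and, for each $Q$, splitting $f\mathbf{1}_{5Q_0}=f\mathbf{1}_{5Q}+f\mathbf{1}_{5Q_0\setminus 5Q}$ with $(S(u+v))^2\le 2(Su)^2+2(Sv)^2$: the \emph{local} part $\int_{E_Q}(S(f\mathbf{1}_{5Q}))^2|g|\le\int_{5Q}(S(f\mathbf{1}_{5Q}))^2|g|$ is bounded directly by the localized bilinear estimate, contributing $\langle|f|\rangle_{p_0,5Q}^2\langle|g|\rangle_{q_0^*,5Q}|Q|$; for the \emph{tail} part, writing the principal chain $Q_0=R_0\supsetneq R_1\supsetneq\dots\supsetneq R_m=Q$ one decomposes $f\mathbf{1}_{5Q_0\setminus 5Q}=\sum_{i<m}f\mathbf{1}_{5R_i\setminus 5R_{i+1}}$, applies $S$ subadditively, and after H\"older on $Q$ with exponents $q_0/2$ and $q_0^*$ is left to control $\big(\fint_{5R_{i+1}}(S(f\mathbf{1}_{5R_i\setminus 5R_{i+1}}))^{q_0}\big)^{1/q_0}$; since $R_{i+1}$ is a \emph{maximal} stopping cube inside $R_i$, feature (b) applied at the parent of $R_{i+1}$ (a non-stopping cube, where $\mathcal M_{S,Q_0}f\lesssim\langle|f|\rangle_{p_0,5R_i}$ and where the thin-annulus term is $\lesssim\langle|f|\rangle_{p_0,5R_i}$) bounds this quantity by $\lesssim\langle|f|\rangle_{p_0,5R_i}$. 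Reassembling and summing over $Q\in\mathcal A$, organizing the resulting double sum over principal cubes so that the tail contributions close against the Carleson packing of $\mathcal A$ and the stopping inequalities, produces the target sparse sum, hence Theorem~\ref{thm:BBR}.

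\textbf{Main obstacle.} Two steps carry the real content. The first is the localized bilinear estimate: its proof expands $(S(h\mathbf{1}_{5Q}))^2=\int_0^\infty|A_t(h\mathbf{1}_{5Q})|^2\,\tfrac{dt}{t}$, uses the $L^{p_0}$--$L^{q_0}$ off-diagonal estimates to localize $A_t(h\mathbf{1}_{5Q})$ scale by scale (a grid of subcubes of sidelength $\sqrt t$ for $t\lesssim\ell(Q)^2$, and the quadratic-estimate bounds for $S$ to absorb $t\gtrsim\ell(Q)^2$), applies H\"older with exponents $q_0/2$ and $q_0^*$ on each piece, and then sums over scales and subcubes; it is precisely this summation --- which forces the ``good'' exponent $p_0$ onto the squared factor and $q_0^*$ onto the linear factor of $g$ --- that is the delicate analytic heart and that dictates the shape of the bound in the theorem. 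The second is the bookkeeping of the tail: features (a)--(b) of $\mathcal M_{S,Q_0}$ are routine, but making $\sum_{Q\in\mathcal A}$ of the tail contributions close against the target sparse sum (rather than merely against something like $\|f\|_{L^{q_0}}^2\|g\|_{L^{q_0^*}}$) requires a careful reorganization of the double sum over principal cubes exploiting both the geometric decrease of $|R|$ along a nested chain and the stopping inequalities. The remaining ingredients --- the H\"older steps, the dyadic-dilation technicalities attached to the factor $5$, and the passage from a single $Q_0$ to $\mathbb R^d$ --- are standard.
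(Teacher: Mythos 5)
The first thing to note is that the paper you are working from does not prove this statement: Theorem \ref{thm:BBR} is quoted verbatim from \cite{BBR}*{Theorem 1.7} and is used purely as a black box in the proof of Theorem \ref{thm:main}, so there is no internal proof to compare your sketch against. Measured on its own terms, your outline has the right general flavour for a proof of quadratic sparse domination of non-integral square functions --- a stopping time driven by $M_{p_0}f$, $M_{q_0^*}g$ and a maximal operator adapted to $S$, a localized quadratic estimate coming from the $L^{p_0}$--$L^{q_0}$ off-diagonal bounds for the pieces $A_t$, and H\"older with exponents $q_0/2$ and $q_0^*$ to produce the factor $\langle |g|\rangle_{q_0^*,5Q}$ --- but as written it is a strategy, not a proof: the two steps you yourself single out as the ``analytic heart'' are exactly where the content of the theorem lies, and they are only described, never carried out.

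Concretely, two gaps. First, the localized bilinear estimate $\int_{5Q}(S(h\mathbf{1}_{5Q}))^2|g|\lesssim \langle|h|\rangle_{p_0,5Q}^2\langle|g|\rangle_{q_0^*,5Q}|Q|$ is asserted with a one-sentence plan; since you never fix the structural hypotheses on $S$ (which family $A_t$, which off-diagonal and quadratic estimates are assumed), the summation over scales and subcubes that is supposed to force $p_0$ onto the squared average and $q_0^*$ onto $g$ cannot be checked, and this is precisely the step that makes the stated exponents come out. Second, and more seriously, your tail treatment splits $f\mathbf{1}_{5Q_0\setminus 5Q}$ along the stopping chain $Q_0=R_0\supsetneq\cdots\supsetneq R_m=Q$ and applies $S$ subadditively; because the form is quadratic you must then square a sum over a chain of unbounded length, and the stopping inequalities give each chain term a bound of size $\langle|f|\rangle_{p_0,5R_i}$ with no decay in $i$, so the ``reorganization of the double sum'' you invoke does not obviously close --- a crude estimate produces chain-length factors that cannot be absorbed into the sparse sum. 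One needs an additional source of geometric decay (Cauchy--Schwarz in the chain with exponentially decaying weights extracted from the off-diagonal estimates), or one must work with the quadratic form itself, decomposing the $t$-integration defining $(Sf)^2$ in a way matched to the stopping cubes so that chain sums are never squared; in the argument of \cite{BBR} it is the quadratic structure of the form $\langle (Sf)^2,g\rangle$ that is exploited, rather than pointwise subadditivity of $S$. Until this step is made precise, the proposal does not yield the theorem.
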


 Using the sparse domination result in Theorem \ref{thm:BBR} as a black box to prove Theorem \ref{thm:main}, it is enough to prove that given a collection of sparse cubes $\mathcal A$ and a weight $w\in A_{2/p_0}\cap \operatorname{RH}_{q_0^*}$, where $q_0^*:=\big(\frac{q_0}{2}\big)'$, we have 
\begin{equation}\label{e.mainestimateni}
\sum_{Q\in\mathcal A} \langle f \sigma \rangle^2_{p_0,Q} \langle 1_{G}w  \rangle_{q_0^*, Q}|Q| \lesssim C_0 [w]_{A_{2/p_0}} [w]_{ \operatorname{RH}_{q_0^*}}\eta_{\varepsilon} ([w]_{ \operatorname{RH}_{q_0^*}}, [w]_{A_{\infty}}) \|f\|_{L^2(\sigma)}^2,
\end{equation}
where $\eta_{\varepsilon}$ is given in \eqref{e.excess}.

\subsection{Proof of Theorem \ref{thm:main}}
\label{sub:estimate}

We start with the following consideration: we denote by $M_{{p_0}, \mathcal A}$ the maximal function, where the corresponding supremum is taken over the cubes in the collection $\mathcal A$.  Let  $G$ be a subset of $\mathbb R^d$,  we define $G'$ as
\begin{equation}\label{e.choiceofGpo2}
G':=G \setminus \bigg\{M_{{p_0}, \mathcal A}(f \sigma)>K[w]_{A_{2/p_0}}^{1/2}\frac{\|f\|_{L^{2}(\sigma)}}{\sqrt{w(G)}}  \bigg\},
\end{equation}
where $K$ is a constant to be chosen. In fact, due to \eqref{e.maxpo2}, we get that $w(G')\geq \frac{3}{4} w(G)$ as long as $K$ is sufficiently large.

Due to the choice of $G'$ in \eqref{e.choiceofGpo2}, for all $Q\in \mathcal A$ such that $G'\cap Q \neq \emptyset$, we have
$$
\langle f \sigma  \rangle_{p_0, Q}  \leq K[w]_{A_{2/p_0}}^{1/2}\frac{\|f\|_{L^{2}(\sigma)}}{\sqrt{w(G)}}, \qquad
\langle  1_{G'} \rangle_{Q}^w  \leq 1.
$$
We are now in a position to proceed with the following pigeonholing. For $r,s\geq 0$, and sets $F$, $G'$, we let
\begin{equation}\label{e.pg}
\mathcal A_{r,s}:= \bigg \{Q\in \mathcal A\,: \,\, \langle f\sigma  \rangle_{p_0, Q} \sim K 2^{-r}[w]_{A_{2/p_0}}^{1/2}\frac{\|f\|_{L^{2}(\sigma)}}{\sqrt{w(G)}}\, ,\,\,  \langle  1_{G'}\rangle_{Q}^w \sim 2^{-s} \bigg \},
\end{equation}
where $G'$ is the set described in \eqref{e.choiceofGpo2} and $\langle f\rangle_Q^w:=\frac{1}{w(Q)}\int_Qf(x)w(x)\,dx$. We write $\mathcal A_{r,s}= \cup_{k=1}^{\infty}  \mathcal A_{r,s,k}$, where
$ \mathcal A_{r,s,1}$ is the collection of maximal cubes in  $\mathcal A_{r,s}$ and for $k\geq 2$,  $ \mathcal A_{r,s,k}$  is the collection of maximal cubes in $\mathcal A_{r,s}\setminus \cup_{j=1}^{k-1} \mathcal A_{r,s,j}$.

Let $Q\in \mathcal{A}_{r,s}$. Choosing $E$ to be equal to $G'\cap Q$ in \eqref{e.claimneedsproving} and recalling \eqref{e.pg} we conclude

\begin{align}\label{e.compaverages}
\notag\langle 1_{G'} w \rangle_{q_0^*, Q}= \Big(\frac{1}{|Q|}\int_Q1_{G'}w^{q_0^*}\,dx\Big)^{1/q_0^*}&= \Big(\frac{w^{q_0^*}(G' \cap Q)}{w^{q_0^*}(Q)}\Big)^{1/q_0^*}\Big(\frac{w^{q_0^*}(Q)}{|Q|}\Big)^{1/q_0^*} \\
\notag&\lesssim  \langle w \rangle_{q_0^*, Q}[w]_{\operatorname{RH}_{q_0^*}}^{(q_0^*+\varepsilon)/(\theta q_0^*)} \left (\frac{w(G' \cap Q)}{w(Q)}\right )^{\frac{1}{\theta'q_0^*}}\\
\notag&\lesssim  \langle w\rangle_Q [w]_{\operatorname{RH}_{q_0^*}}^{1+(q_0^*+\varepsilon )/(\theta q_0^*)}(\langle 1_{G'} \rangle_Q^w )^{\frac{1}{\theta'q_0^*}}\\
&\simeq   [w]_{\operatorname{RH}_{q_0^*}}^{2-\frac{\varepsilon}{q_0^*(q_0^*+\varepsilon-1)}}2^{-\frac{s}{\theta'q_0^*}}\langle w\rangle_Q.
\end{align}

\color{black}

We are now in a position to prove \eqref{e.mainestimateni}. We will need two estimates. For the first one, notice that 
\begin{align*}
\sum_{Q\in\mathcal A_{r,s,1}}w(Q) &\leq \sum_{Q\in\mathcal A_{r,s,1}}w(\{x\in Q:M_w(1_{G'})>2^{-s}\}) \\ &\leq w(\{x:M_w(1_{G'})>2^{-s}\}) \leq 2^sw(G'),
\end{align*}
because $M_w$, which is the maximal function defined with respect to the measure $w(x)dx$, maps $L^1(w)$ to $ L^{1,\infty}(w)$ with constant independent of $w$. On the other hand, if $Q_0\in \mathcal A_{r,s,1}$,
\begin{align*}
\sum_{\substack{Q\in \mathcal{A}_{r,s}\\ Q\subset Q_0} } w(Q) & = \sum_{\substack{Q\in \mathcal{A}_{r,s}\\ Q\subset Q_0 }} \frac{w(Q)}{|Q|}|Q|  \leq \sum_{\substack{Q\in \mathcal{A}_{r,s}\\ Q\subset Q_0} } 2\frac{w(Q)}{|Q|}|E(Q)|\\
&\leq 2  \sum_{\substack{Q\in \mathcal{A}_{r,s}\\ Q\subset Q_0 }} \int_{E(Q)} M(w1_{Q_0}) \leq 2 \int_{Q_0} M(w1_{Q_0})\leq 2[w]_{A_{\infty}}w(Q_0).
\end{align*}
Hence,
$$
\sum_{Q\in\mathcal A_{r,s}}w(Q)\leq [w]_{A_{\infty}}\sum_{Q\in\mathcal A_{r,s,1}}w(Q)\leq  [w]_{A_{\infty}}2^sw(G').
$$
Then we have 
\begin{align}\label{estimate1ni}
\notag&\sum_{Q\in\mathcal A_{r,s}} \langle f \sigma \rangle_{p_0, Q}^2 \langle 1_{G'} w \rangle_{q_0^*, Q}|Q|  \simeq 2^{-2r}[w]_{A_{2/p_0}} \frac{\| f\| _{L^{2}(\sigma)}^{2}}{w(G')}\sum_{Q\in\mathcal A_{r,s}} \langle 1_{G'} w \rangle_{q_0^*, Q}|Q|  \\
\notag&\quad\lesssim 2^{-2r}[w]_{A_{2/p_0}}[w]_{\operatorname{RH}_{q_0^*}}^{2-\frac{\varepsilon}{q_0^*(q_0^*+\varepsilon-1)}}2^{-\frac{s}{\theta'q_0^*}}\frac{\| f\| _{L^{2}(\sigma)}^{2}}{w(G')}\sum_{Q\in\mathcal A_{r,s}}  \langle w\rangle_Q|Q| \\
&\quad \leq  2^{-2r}[w]_{A_{2/p_0}}[w]_{\operatorname{RH}_{q_0^*}}^{2-\frac{\varepsilon}{q_0^*(q_0^*+\varepsilon-1)}}2^{s\big(1-\frac{1}{\theta' q_0^*}\big)}[w]_{A_{\infty}}\| f\| _{L^{2}(\sigma)}^{2}
\end{align}
\color{black}
where we have used \eqref{e.compaverages}.

We now produce the second estimate.  For $Q \in \mathcal{A}_{r,s}$, define a exceptional set $E_Q \subseteq Q$ relative to $\mathcal{A}_{r,s}$ by
$$
E_Q : = Q\  \Big\backslash \bigcup_{\substack{ Q' \subsetneq Q \\ Q' \in \mathcal{A}_{r,s}}} Q'.
$$
The sparsity condition and the definition of $ \mathcal{A}_{r,s}$ implies that (see \cite{DLR}*{(4.1)}) 

\begin{equation}
\label{eq:sparsity}
\langle f \sigma  \rangle_{p_0, Q}\sim \langle1_{E_Q} f \sigma  \rangle_{p_0,Q}.
\end{equation}
Let $\varphi(p_0):= \frac{p_0}{2-p_0}$. Thus, 
\begin{align} \label{estimate2ni}
\notag&\sum_{Q\in\mathcal A_{r,s}} \langle f \sigma \rangle_{p_0, Q}^2  \langle 1_{G'} w \rangle_{q_0^*, Q}|Q|\lesssim 2^{\frac{1}{\theta q_0^*}} [w]_{\operatorname{RH}_{q_0^*}}^{2-\frac{\varepsilon}{q_0^*(q_0^*+\varepsilon-1)}}2^{-\frac{s}{\theta'q_0^*}}\sum_{Q\in\mathcal A_{r,s}} \langle f \sigma \rangle_{p_0, Q}^2 \langle w \rangle_Q |Q|\\
 \notag&\quad\simeq 2^{\frac{1}{\theta q_0^*}} [w]_{\operatorname{RH}_{q_0^*}}^{2-\frac{\varepsilon}{q_0^*(q_0^*+\varepsilon-1)}}2^{-\frac{s}{\theta'q_0^*}}[w]_{A_{2/p_0}}\sum_{Q\in\mathcal A_{r,s}} \langle f\sigma  \rangle_{p_0, Q}^2  \langle \sigma \rangle_{\varphi(p_0), Q}^{-1}|Q|\\
\notag&\quad\leq 2^{\frac{1}{\theta q_0^*}} [w]_{\operatorname{RH}_{q_0^*}}^{2-\frac{\varepsilon}{q_0^*(q_0^*+\varepsilon-1)}}2^{-\frac{s}{\theta'q_0^*}} [w]_{A_{2/p_0}} \sum_{Q\in\mathcal A_{r,s}} \langle1_{E_Q} f \sigma  \rangle_{p_0,Q}^2 \langle \sigma \rangle_{\varphi(p_0), Q}^{-1}|Q|\\
\notag&\quad =  2^{\frac{1}{\theta q_0^*}} [w]_{\operatorname{RH}_{q_0^*}}^{2-\frac{\varepsilon}{q_0^*(q_0^*+\varepsilon-1)}}2^{-\frac{s}{\theta'q_0^*}}[w]_{A_{2/p_0}} \sum_{Q\in\mathcal A_{r,s}} \Big(\frac{1}{|Q|}\int_Q1_{E_Q}f^{p_0} \sigma^{p_0/2} \sigma^{p_0/2}\,dx\Big)^{2/p_0} \langle \sigma \rangle_{\varphi(p_0), Q}^{-1}|Q|\\
 \notag&\quad\leq 2^{\frac{1}{\theta q_0^*}} [w]_{\operatorname{RH}_{q_0^*}}^{2-\frac{\varepsilon}{q_0^*(q_0^*+\varepsilon-1)}}2^{-\frac{s}{\theta'q_0^*}}[w]_{A_{2/p_0}}  \sum_{Q\in\mathcal A_{r,s}}\int_Q 1_{E_Q}f^{2} \sigma\,dx\\
&\quad \leq  2^{\frac{1}{\theta q_0^*}} [w]_{\operatorname{RH}_{q_0^*}}^{2-\frac{\varepsilon}{q_0^*(q_0^*+\varepsilon-1)}}2^{-\frac{s}{\theta'q_0^*}}[w]_{A_{2/p_0}} \|f\|_{L^{2}(\sigma)}^{2},
\end{align}
where we used \eqref{e.compaverages} in the first inequality, sparsity \eqref{eq:sparsity} in the second inequality, and H\"older inequality in the third one.

Therefore, in view of \eqref{estimate1ni} and \eqref{estimate2ni} we are led to estimate the sum (recall that $\theta$ is defined in \eqref{eq:alpha}, and there is an implicit $\varepsilon$ depending on $[w^{q_0^*}]_{A_{\infty}} $ in Proposition~\ref{prop:srH}) 
\begin{align}
\label{eq:todojunto}
\notag&  \sum_{r,s\ge 0} \sum_{Q\in\mathcal A_{r,s}} \langle f \sigma \rangle_{p_0, Q}^2  \langle 1_{G'} w \rangle_{q_0^*, Q}|Q| \\
\notag& \lesssim   \sum_{r,s\ge 0}2^{\frac{1}{\theta q_0^*}} [w]_{\operatorname{RH}_{q_0^*}}^{2-\frac{\varepsilon}{q_0^*(q_0^*+\varepsilon-1)}}[w]_{A_{2/p_0}} 2^{-\frac{s}{\theta'q_0^*}}\min\{2^{-2r}2^s[w]_{A_{\infty}},1\}\|f\|_{L^{2}(\sigma)}^{2} \\
\notag& = 2^{\frac{1}{\theta q_0^*}} \|f\|_{L^{2}(\sigma)}^{2} [w]_{\operatorname{RH}_{q_0^*}}^{2-\frac{\varepsilon}{q_0^*(q_0^*+\varepsilon-1)}}[w]_{A_{2/p_0}}\sum_{s\ge 0} 2^{-\frac{s}{\theta'q_0^*}}\\
\notag&\qquad \times\Big(\sum_{r\ge\frac12(s+\log[w]_{A_{\infty}})}2^{-2r}2^s[w]_{A_{\infty}}
+\sum_{r\le\frac12(s+\log[w]_{A_{\infty}})}1\Big)\\
\notag& = 2^{\frac{1}{\theta q_0^*}} \|f\|_{L^{2}(\sigma)}^{2} [w]_{A_{2/p_0}} [w]_{\operatorname{RH}_{q_0^*}}^{2-\frac{\varepsilon}{q_0^*(q_0^*+\varepsilon-1)}} \\
&\qquad\times\Big(\sum_{s\ge 0} 2^{s-\frac{s}{\theta'q_0^*}}\sum_{r\ge\frac12(s+\log[w]_{A_{\infty}})}2^{-2r}[w]_{A_{\infty}}
+\sum_{s\ge0}2^{-\frac{s}{\theta'q_0^*}}(s + \log[w]_{A_{\infty}})\Big).
\end{align}
\color{black}

On the other hand, 
$$
\sum_{s\ge 0} 2^{s-\frac{s}{\theta'q_0^*}}\sum_{r\ge\frac12(s+\log[w]_{A_{\infty}})}2^{-2r}[w]_{A_{\infty}}\simeq \sum_{s\ge 0} 2^{-\frac{s}{\theta'q_0^*}}= \frac{2^{\frac{1}{\theta' q_0^*}}}{2^{\frac{1}{\theta' q_0^*}}-1}, 
$$
and 
$$
\sum_{s\ge0}2^{-\frac{s}{\theta'q_0^*}}s \simeq \frac{2^{\frac{1}{\theta' q_0^*}}}{(2^{\frac{1}{\theta' q_0^*}}-1)^2},\qquad \sum_{s\ge0}2^{-\frac{s}{\theta'q_0^*}}\log[w]_{A_{\infty}}\simeq\frac{2^{\frac{1}{\theta' q_0^*}}}{2^{\frac{1}{\theta' q_0^*}}-1}\log[w]_{A_{\infty}}.
$$
Hence, plugging the above into \eqref{eq:todojunto}, we obtain the following estimate 
\begin{align*}
&\sum_{Q\in\mathcal A} \langle f \sigma \rangle^2_{p_0,Q} \langle 1_{G}w  \rangle_{q_0^*, Q}|Q|\\ 
& \qquad \lesssim  2^{\frac{1}{\theta q_0^*}} [w]_{A_{2/p_0}}  [w]_{\operatorname{RH}_{q_0^*}}^{2-\frac{\varepsilon}{q_0^*(q_0^*+\varepsilon-1)}}\frac{2^{\frac{1}{\theta' q_0^*}}}{2^{\frac{1}{\theta' q_0^*}}-1}\bigg(\frac{1}{2^{\frac{1}{\theta' q_0^*}}-1}+1+\log[w]_{A_{\infty}}\bigg) \|f\|_{L^{2}(\sigma)}^{2}\\
& \qquad \lesssim 2^{\frac{1}{\theta q_0^*}} [w]_{A_{2/p_0}} [w]_{\operatorname{RH}_{q_0^*}}^{2-\frac{\varepsilon}{q_0^*(q_0^*+\varepsilon-1)}}\frac{2^{\frac{1}{\theta' q_0^*}}}{2^{\frac{1}{\theta' q_0^*}}-1}\bigg(\frac{2^{\frac{1}{\theta' q_0^*}}}{2^{\frac{1}{\theta' q_0^*}}-1}+\log[w]_{A_{\infty}}\bigg) \|f\|_{L^{2}(\sigma)}^{2}
\end{align*}
Recalling our choice of $\theta$, we have
$$
\frac{1}{\theta' q_0^*}=\frac{\varepsilon}{q_0^*(q_0^*+\varepsilon-1)},
$$
which is less than one, so we have
$$
\frac{2^{\frac{1}{\theta' q_0^*}}}{2^{\frac{1}{\theta' q_0^*}}-1}\lesssim \theta' q_0^* = \frac{q_0^*(q_0^*+\varepsilon-1)}{\varepsilon}\sim \frac{1}{\varepsilon}. 
$$

Altogether, we have obtained 
\begin{equation*}
\sum_{Q\in\mathcal A} \langle f \sigma \rangle^2_{p_0,Q} \langle 1_{G'}w  \rangle_{q_0^*, Q}|Q|  \lesssim [w]_{A_{2/p_0}} [w]_{ \operatorname{RH}_{q_0^*}}^{2-\frac{\varepsilon}{q_0^*(q_0^*+\varepsilon-1)}}  \frac{1}{\varepsilon}\bigg(\frac{1}{\varepsilon}+\log[w]_{A_{\infty}}\bigg) \|f\|_{L^2(\sigma)}^2,
\end{equation*}
where $0<\varepsilon \leq \frac{q_0^*}{2^{d+1} [w^{q_0^*}]_{A_{\infty}}-1}$. The proof of Theorem \ref{thm:main} is complete.

\subsection{Comparing with strong-type results}

In \cite{BBR}*{Theorem 1.8} the following strong-type quantitative weighted inequality was deduced from the sparse domination in Theorem~\ref{thm:BBR}.

\begin{theorem}{\cite{BBR}*{Theorem 1.8}} 
\label{thm:BBR2}
Let $p_0<2<q_0$. For any sparse family $\mathcal{A}\subset \mathcal{D}$, functions $f,g\in L_{\operatorname{loc}}^1(\mathbb{R}^d)$, $q\in (2,q_0)$ and weight $w\in A_{\frac{q}{p_0}}\cap \operatorname{RH}_{(\frac{q_0}{q})'}$ we have
$$
\sum_{Q\in \mathcal{A}}\langle |f|\rangle_{p_0, 5Q}^2\langle |g|\rangle_{q_0^*, 5Q}|Q|\le C_0\big([w]_{A_{\frac{q}{p_0}}}\cdot [w]_{\operatorname{RH}_{(\frac{q_0}{q})'}}\big)^{2\gamma(q)}\|f\|_{L^q(w)}^2\|g\|_{L^{q^*}(\sigma)},
$$
uniformly in the weight and the sparse collection, where 
$$
\gamma(q):=\max\Big\{\frac{1}{q-p_0},\big(\frac{q_0}{q}\big)'\frac{1}{2q_0^*}\Big\}\quad \text{ and } \sigma:=w^{1-q^*}.
$$
The dependence of the above estimate on the weight characteristic is sharp.
\end{theorem}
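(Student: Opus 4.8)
The plan is to deduce the estimate from the sparse domination itself (Theorem~\ref{thm:BBR}) together with a sharp weighted bound for the model bilinear form on the right-hand side, regarded as a form in $(|f|^{2},g)$ tested against the dual pair $L^{q/2}(w)$, $L^{q^{*}}(\sigma)$; note that $(q/2)'=q^{*}$ and $w^{1-q^{*}}=\sigma$, so this is the natural pairing.

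First I would make the standard dyadic reduction: the dilated cubes $5Q$ in the averages may be replaced by cubes of a single dyadic lattice at the price of summing over the $\sim 3^{d}$ shifted lattices, so it suffices to bound $\sum_{Q\in\mathcal A}\langle|f|\rangle_{p_0,Q}^{2}\langle|g|\rangle_{q_0^{*},Q}|Q|$ for a sparse $\mathcal A\subset\mathcal D$. By homogeneity normalise $\|f\|_{L^{q}(w)}=\|g\|_{L^{q^{*}}(\sigma)}=1$ and move the weights into the functions, $F:=|f|w^{1/q}$, $G:=|g|\sigma^{1/q^{*}}$, so that $\|F\|_{q}=\|G\|_{q^{*}}=1$ and
$$\langle|f|\rangle_{p_0,Q}=\big\langle F^{p_0}w^{-p_0/q}\big\rangle_Q^{1/p_0},\qquad \langle|g|\rangle_{q_0^{*},Q}=\big\langle G^{q_0^{*}}\sigma^{-q_0^{*}/q^{*}}\big\rangle_Q^{1/q_0^{*}}.$$
On the $f$-side, Hölder's inequality with exponents $q/p_0$ and $(q/p_0)'$ (chosen so that $w^{1-(q/p_0)'}$ is exactly the dual weight appearing in \eqref{e.ap}) turns $\langle F^{p_0}w^{-p_0/q}\rangle_Q$ into $\langle F^{q}\rangle_Q^{p_0/q}$ times a power of $\langle w\rangle_Q^{-1}$ controlled by $[w]_{A_{q/p_0}}$; on the $g$-side the reverse Hölder hypothesis, in the sharp quantitative form of Proposition~\ref{prop:srH} (sharp Gehring for $w^{q_0^{*}}$), lets one pass from $\langle G^{q_0^{*}}\sigma^{-q_0^{*}/q^{*}}\rangle_Q$ back to $\langle G^{q^{*}}\rangle_Q$ times a power of $\langle w\rangle_Q$, with an error governed by $[w]_{\operatorname{RH}_{(q_0/q)'}}$ and $[w^{q_0^{*}}]_{A_\infty}$.

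The core is then a corona (principal cube) decomposition of $\mathcal A$, performed, as in the sharp $A_p$ theory, with respect to the averaging processes attached to the weights $w$ and $\sigma$ (equivalently to $\langle F^{q}\rangle_Q$ and $\langle G^{q^{*}}\rangle_Q$): on each corona the relevant averages are comparable, so within a corona the sum collapses to a Carleson-embedding estimate, while across generations one sums a geometric series whose decay rate is the $A_\infty$ constant of the appropriate power of $w$. The $f$-half costs the Buckley exponent, i.e.\ a power $1/(q-p_0)$ of $[w]_{A_{q/p_0}}$ per copy of $\langle|f|\rangle_{p_0,Q}$; the $g$-half costs a power $(q_0/q)'/(2q_0^{*})$ coming from the geometric summation together with the quantitative reverse Hölder/$A_\infty$ comparisons (cf.\ \eqref{e.cuantrhnorm}). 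Collecting the two halves and using $[w]_{A_{q/p_0}},[w]_{\operatorname{RH}_{(q_0/q)'}}\ge 1$ to homogenise yields the product characteristic raised to the power $2\gamma(q)$.

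The main obstacle is extracting the \emph{sharp} exponent $2\gamma(q)=2\max\{1/(q-p_0),(q_0/q)'/(2q_0^{*})\}$ rather than the sum of the two contributions. The standard device is to run the stopping time with respect to whichever of the two averaging measures is the worse one and to dominate the other by it; making this precise forces careful bookkeeping of the corona generations and relies on the sharp quantitative Gehring input on the $g$-side, since any loss there degrades the exponent. Finally, the sharpness of $2\gamma(q)$ is a separate matter, established by testing the sparse form against power weights $w(x)=|x|^{\alpha}$ and suitable indicator-type $f,g$, with $\alpha$ approaching the two critical values at which $A_{q/p_0}$, respectively $\operatorname{RH}_{(q_0/q)'}$, degenerates; the two regimes produce the two terms in the maximum.
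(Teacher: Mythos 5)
This theorem is quoted verbatim from \cite{BBR}*{Theorem 1.8} for comparison purposes only; the present paper contains no proof of it, so there is no ``paper's own proof'' to measure your attempt against, and the reference to consult is \cite{BBR} itself. I will therefore evaluate your sketch on its own terms.

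The opening moves are sound and in fact sharper than you realise. After the normalisation $F=|f|w^{1/q}$, $G=|g|\sigma^{1/q^*}$, H\"older on a single cube $Q$ against the $A_{q/p_0}$ dual weight on the $f$-side and against the power $w^{(q_0/q)'}$ (which is what $\sigma^{-q_0^*/(q^*-q_0^*)}$ turns out to be, using $(q^*-1)q_0^*/(q^*-q_0^*)=(q_0/q)'$) on the $g$-side gives, after applying the $A_{q/p_0}$ and $\operatorname{RH}_{(q_0/q)'}$ conditions,
\begin{equation*}
\langle |f|\rangle_{p_0,Q}^2\langle|g|\rangle_{q_0^*,Q}\le\big([w]_{A_{q/p_0}}[w]_{\operatorname{RH}_{(q_0/q)'}}\big)^{2/q}\langle F^q\rangle_Q^{2/q}\langle G^{q^*}\rangle_Q^{1/q^*},
\end{equation*}
because the resulting powers of $\langle w\rangle_Q$ (namely $-2/q$ from the $A$-side and $+2/q$ from the $\operatorname{RH}$-side) cancel exactly. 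In particular, no sharp quantitative Gehring is needed at this stage; the bare $\operatorname{RH}_{(q_0/q)'}$ condition suffices, so your invocation of Proposition~\ref{prop:srH} there is misplaced.

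The genuine gap is in the summation. Since $2/q<2\gamma(q)$, the entire excess power of the characteristic must be produced by the Carleson-embedding step, and this is precisely the step you describe least. The naive H\"older of the series against $\ell^{q/2}\times\ell^{q^*}$ fails because $2/q+1/q^*=1$, so the resulting unweighted sums $\sum_Q\langle F^q\rangle_Q|Q|$ and $\sum_Q\langle G^{q^*}\rangle_Q|Q|$ diverge; one must insert the weight (or $\sigma$) back into the Carleson sequence, and it is here that the $A_\infty$-type information and the eventual $\max$ structure of $\gamma(q)$ appear. Your one-sentence plan, ``run the stopping time with respect to whichever of the two averaging measures is the worse one and dominate the other by it,'' is the entire content of the theorem and is not an argument; as written there is nothing to prevent the corona argument from producing the \emph{sum} of the two critical exponents rather than their maximum. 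A cleaner and more likely route to the stated sharp exponent is the one the present paper itself uses elsewhere: apply Lemma~\ref{lem:WeightProperties}(ii) and \eqref{eqtn:WeightProperty} to convert $w\in A_{q/p_0}\cap\operatorname{RH}_{(q_0/q)'}$ into $w^{(q_0/q)'}\in A_{\phi(q)}$, reduce the mixed sparse form to a quadratic sparse form governed by the single weight $w^{(q_0/q)'}$, and invoke the known sharp $A_p$ bound for such forms; this is what explains both the factor $(q_0/q)'$ hidden inside $\gamma(q)$ and the Buckley term $1/(q-p_0)$. Finally, your sharpness outline is only a gesture: one needs two distinct families of power-weight examples, one saturating $1/(q-p_0)$ and one saturating $(q_0/q)'/(2q_0^*)$, and you have not indicated how the $\operatorname{RH}$ side is extremised.
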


From Theorem \ref{thm:BBR2}, it can be deduced (see \cite{BBR}*{Corollary 1.9}) that 
$$
\|S\|_{L^2(w)\to L^2(w)}\lesssim \big([w]_{A_{\frac{2}{p_0}}}\cdot [w]_{\operatorname{RH}_{(\frac{q_0}{2})'}}\big)^{\gamma(2)}
$$
and, on the other hand, we have obtained
$$
\|S(f)\|_{L^{2,\infty}(w)} \le C [w]_{A_{2/p_0}}^{1/2} [w]_{ \operatorname{RH}_{q_0^*}}^{1/2}\eta([w]_{ \operatorname{RH}_{q_0^*}}, [w^{q_0^*}]_{A_{\infty}},[w]_{A_{\infty}})^{1/2}\|f\|_{L^2(w)},
$$
where  
\begin{equation*}
\eta ([w]_{ \operatorname{RH}_{q_0^*}},[w^{q_0^*}]_{A_{\infty}}, [w]_{A_{\infty}}) := [w]_{ \operatorname{RH}_{q_0^*}}^{1-\gamma}   [w^{q_0^*}]_{A_{\infty}} \big(  [w^{q_0^*}]_{A_{\infty}} +\log[w]_{A_{\infty}}\big)
\end{equation*}
for some $\gamma<1/4$.

One may wonder whether the exponent in the weak-type estimate we obtain is smaller than the exponent in the existing strong-type estimate. If we focus on the $[w]_{A_{2/p_0}}$ constant and ignore the $\phi$-term in the weak estimate, the power for the strong-type estimate is $\gamma(2)=\frac{1}{2-p_0}$ and the power for the weak-type estimate is $\frac{1}{2}$. Clearly, the weak estimate is smaller than the strong one in terms of the $[w]_{A_{2/p_0}}$ constant. On the other hand, concerning the reverse H\"older constant $[w]_{ \operatorname{RH}_{q_0^*}}$, once again we have power $\frac{1}{2-p_0}$ for the strong estimate but now the power for the weak-type estimate is $1-\gamma/2$. In this case, it is not possible to conclude which bound is smaller.

\section{Results for \texorpdfstring{$p\neq 2$}{p≠2}}
\label{sec:pneq2}

The method of proof we have used for the main theorem seems exclusive to the case $p=2$. One could obtain bounds for the other cases using a quantitative version of a qualitative extrapolation theorem which involves the classes of weights $A_{\frac{q}{p_0}}\cap \operatorname{RH}_{(\frac{q_0}{q})'}$, see \cite{AuscherMartellIGeneral}*{Theorem 4.9}, and which we present in Theorem \ref{thm:Extrapolation} below. Although these bounds, for $p\neq 2$, may be smaller than the strong bounds in certain cases, we cannot assure this in general. We can give the following estimates for the case $p\neq 2$.

\begin{corollary} \label{c.allp}
Let $p_0<q<q_0$. For any functions $f\in L^q(w)$, and weights $w\in A_{\frac{q}{p_0}}\cap \operatorname{RH}_{(\frac{q_0}{q})'}$, we have
\begin{multline*}
\|S(f)\|_{L^{q,\infty}(w)} \lesssim ([w]^{(q_0/q)'}_{A_{q/p_0}}[w]^{(q_0/q)'}_{\operatorname{RH}_{(q_0/q)'}})^{\beta(2,q){\frac{3-\gamma+q_0^*}{ 2q^*_0}}}\\
\times\Big([w]^{(q_0/q)'}_{A_{q/p_0}}[w]^{(q_0/q)'}_{\operatorname{RH}_{(q_0/q)'}}+\frac{1}{q_0^*}\log([w]^{(q_0/q)'}_{A_{q/p_0}}[w]^{(q_0/q)'}_{\operatorname{RH}_{(q_0/q)'}})\Big)^{\frac{\beta(2,q)}{2}}\|f\|_{L^q(w)},
\end{multline*}
for some small $\gamma<1/4$ and $\beta(p,q)\coloneqq \max (1, \frac{(q_{0} - q)(p -
      p_{0})}{(q_{0} - p)(q - p_{0})})$.
The estimate is uniformly in the weight.
\end{corollary}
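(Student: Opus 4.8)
The plan is to deduce Corollary \ref{c.allp} from the endpoint bound at $p=2$ already established — Theorem \ref{thm:main}, in the explicit form of Corollary \ref{cor:explicit} — by feeding it into the quantitative limited-range extrapolation theorem, Theorem \ref{thm:Extrapolation} (the quantitative analogue of \cite{AuscherMartellIGeneral}*{Theorem 4.9}). Nothing about $S$ enters beyond the bound of Corollary \ref{cor:explicit}; the argument is purely weight-theoretic extrapolation along the scale $A_{\cdot/p_0}\cap\operatorname{RH}_{(q_0/\cdot)'}$.

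First I would recast the $p=2$ bound in a form adapted to extrapolation: a weak-type $(2,2)$ inequality, valid for every $w\in A_{2/p_0}\cap\operatorname{RH}_{q_0^*}$, whose constant is a single increasing function $\Phi$ of the quantity $t_2:=[w]^{q_0^*}_{A_{2/p_0}}[w]^{q_0^*}_{\operatorname{RH}_{q_0^*}}$, which is the value at $q=2$ of the combined characteristic appearing in the statement. To this end I would use $[w]_{A_\infty}\lesssim[w]_{A_{2/p_0}}$, the identification $\operatorname{RH}_{q_0^*}=\operatorname{RH}_{(q_0/2)'}$, and the comparison \eqref{e.cuantrhnorm}, which yields $[w^{q_0^*}]_{A_\infty}\lesssim([w]_{A_{2/p_0}}[w]_{\operatorname{RH}_{q_0^*}})^{q_0^*}=t_2$, to dominate the excess factor $\eta$ of \eqref{e.excessbis} by a power of $t_2$ times $\big(t_2+\tfrac1{q_0^*}\log t_2\big)$; this gives $\Phi(t)=t^{a}\big(t+\tfrac1{q_0^*}\log t\big)^{1/2}$ for an explicit exponent $a$ read off after expanding $\eta$. (Equivalently one can keep $\varepsilon$ free in Theorem \ref{thm:main} and optimize it for each $q$ at the end.)

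Next I would invoke Theorem \ref{thm:Extrapolation}, applied to the family of pairs $\big(S(\sigma\,\cdot),\,\cdot\big)$ at base exponent $p=2$, base class $A_{2/p_0}\cap\operatorname{RH}_{q_0^*}$ and base constant $\Phi$. For $p_0<q<q_0$ and $w\in A_{q/p_0}\cap\operatorname{RH}_{(q_0/q)'}$ it produces the weak-type $(q,q)$ bound, with constant obtained by replacing the base quantity $t_2$ by its level-$q$ counterpart $T:=[w]^{(q_0/q)'}_{A_{q/p_0}}[w]^{(q_0/q)'}_{\operatorname{RH}_{(q_0/q)'}}$ and raising the whole expression to the extrapolation exponent $\beta(2,q)=\max\big(1,\tfrac{(q_0-q)(2-p_0)}{(q_0-2)(q-p_0)}\big)$. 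The factor $\beta(2,q)$ is the limited-range analogue, at the weak-type base point $p=2$, of the power $\gamma(q)$ governing the strong-type estimate of Theorem \ref{thm:BBR2}: it records the number of steps (equivalently, the geometric ratio of the iterates) in the Rubio de Francia algorithm adapted to the $A_{\cdot/p_0}\cap\operatorname{RH}_{(q_0/\cdot)'}$ scale, run on $w$ and on a suitable power of $w$ so that the $A$ and the $\operatorname{RH}$ constraints are met simultaneously. Distributing $\beta(2,q)$ over $\Phi$ turns $T^{a}\big(T+\tfrac1{q_0^*}\log T\big)^{1/2}$ into $T^{\beta(2,q)a}\big(T+\tfrac1{q_0^*}\log T\big)^{\beta(2,q)/2}$, which with $a=\tfrac{3-\gamma+q_0^*}{2q_0^*}$ is exactly the displayed inequality; passing from the testing characterization recalled in Section \ref{sec:proof} back to the operator norm $\|S\|_{L^q(w)\to L^{q,\infty}(w)}$ costs only absolute constants.

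The main obstacle is the quantitative bookkeeping inside the extrapolation step: one must verify that the iteration in the limited range $(p_0,q_0)$ contributes exactly the factor $\beta(2,q)$ — and no spurious further power — to the exponent of $T$, and that the logarithmic term survives with the stated shape rather than being inflated by the iteration; this is also what pins down the precise value of the base exponent $a$ emerging from the first step, which must work out equal to $\tfrac{3-\gamma+q_0^*}{2q_0^*}$. A secondary check is that the whole range $p_0<q<q_0$ is admissible — both for the input bound, which uses the standing hypothesis $p_0<2<q_0$, and for the convergence of the iteration — and that the $A_{q/p_0}$ role of the weight, which at $p=2$ entered only through the maximal-function estimate \eqref{e.maxpo2}, is faithfully reproduced, the extrapolation scale $A_{\cdot/p_0}\cap\operatorname{RH}_{(q_0/\cdot)'}$ being precisely tailored to it.
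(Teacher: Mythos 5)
Your overall strategy is exactly the paper's: take the endpoint bound of Corollary~\ref{cor:explicit}, rewrite it as an increasing function of a single weight characteristic, and feed it into the quantitative restricted-range extrapolation machinery. Two points in your execution need tightening, though. First, you recast the $p=2$ constant as a function $\Phi$ of $t_2=[w]^{q_0^*}_{A_{2/p_0}}[w]^{q_0^*}_{\operatorname{RH}_{q_0^*}}$, but Theorem~\ref{thm:Extrapolation} is phrased in terms of $[w^{q_0^*}]_{A_{\phi(2)}}$, and by \eqref{eq:aqrh} one has $[w^{q_0^*}]_{A_{\phi(2)}}\le t_2$, so $\Phi(t_2)\ge\Phi([w^{q_0^*}]_{A_{\phi(2)}})$ and a bound by $\Phi(t_2)$ does \emph{not} supply the hypothesis $\varphi([w^{q_0^*}]_{A_{\phi(2)}})$ that the theorem requires. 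The correct route (which the paper takes in \eqref{eq:loss}) is to dominate each factor $[w]_{A_{2/p_0}}$, $[w]_{\operatorname{RH}_{q_0^*}}$, $[w^{q_0^*}]_{A_\infty}$ individually by powers of $[w^{q_0^*}]_{A_{\phi(2)}}$ using the \emph{left} inequality in \eqref{eq:aqrh} together with $[w]_{A_\infty}\le[w]_{A_p}$; this yields $\varphi(u)=u^{\frac{3-\gamma+q_0^*}{2q_0^*}}\big(u+\tfrac{1}{q_0^*}\log u\big)^{1/2}$, one extrapolates, and only \emph{then} dominates $[w^{(q_0/q)'}]_{A_{\phi(q)}}\le T:=[w]^{(q_0/q)'}_{A_{q/p_0}}[w]^{(q_0/q)'}_{\operatorname{RH}_{(q_0/q)'}}$ via the \emph{right} inequality in \eqref{eq:aqrh}. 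Your detour through $t_2$ applies \eqref{eq:aqrh} in the wrong direction at the wrong stage, and patching it (say via $t_2\le[w^{q_0^*}]_{A_{\phi(2)}}^2$) would inflate the exponent.

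Second, you invoke Theorem~\ref{thm:Extrapolation} on the family of pairs $(S(\sigma\,\cdot),\,\cdot)$, whose extrapolation hypothesis would be the \emph{strong}-type bound $\|S(\sigma g)\|_{L^2(w)}\le C\varphi\|g\|_{L^2(w)}$ — which you do not have. What is available is the weak-type endpoint, so one must run the argument through the weak-type variant, as the paper does in Corollary~\ref{corol:extrapol-weak}: apply Theorem~\ref{thm:Extrapolation} to the level-set pairs $(f_\lambda,g)$ with $f_\lambda=\lambda\,\chi_{\{f>\lambda\}}$, for which the $L^p(w)$ hypothesis \emph{is} equivalent to the $L^{p,\infty}(w)$ input, and take the supremum over $\lambda$ at the end. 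Your remark about "passing from the testing characterization back to the operator norm" refers to the $(1)\!\Leftrightarrow\!(3)$ equivalence used to prove the endpoint estimate, not to this level-set reduction, which is the step that actually licenses extrapolating a weak-type input. With these two corrections, your proposal coincides with the paper's proof, and the exponent $a=\tfrac{3-\gamma+q_0^*}{2q_0^*}$ you compute is indeed the right one.
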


\subsection{Proof of Corollary \ref{c.allp}}

For the proof, we use extrapolation. First, let us recall some properties of the classes of weights $A_p$ and $\operatorname{RH}_s$.

\begin{lemma}
  \label{lem:WeightProperties}
  The following properties of the weight classes $A_{p}$ and $\operatorname{RH}_{q}$ are true.
  \begin{enumerate}
  \item[(i)] For $p \in (1,\infty)$, a weight $w$ will be contained in
    the class $A_{p}$ if and only if $w^{1 - p'} \in A_{p'}$. Moreover,
    \begin{equation*}
      \big[w^{1-p'}\big]_{A_{p'}} = \big[w\big]^{p' - 1}_{A_{p}}.
    \end{equation*}
  \item[(ii)] For $q \in [1,\infty]$ and $s \in [1,\infty)$, a weight
    $w$ will be contained in $A_{q} \cap \operatorname{RH}_{s}$ if and only if $w^{s}
    \in A_{s(q - 1) + 1}$. Moreover,
    \begin{equation}
    \label{eq:aqrh}
      \max\{[w]^{s}_{A_{q}} , [w]^{s}_{\operatorname{RH}_{s}}\} \le [w^{s}]_{A_{s(q - 1) + 1}}    
      \leq [w]^{s}_{A_{q}} [w]^{s}_{\operatorname{RH}_{s}}.
    \end{equation} 

  \end{enumerate}
\end{lemma}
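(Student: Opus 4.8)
The plan is to prove Lemma \ref{lem:WeightProperties}, which collects two standard structural facts about the interaction of Muckenhoupt and reverse Hölder classes. Both parts are proved by direct manipulation of the defining averages; no deep theory is needed, only Hölder's inequality and careful bookkeeping of exponents. I would present part (i) first as a warm-up, then part (ii), which is the substantive assertion.

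For part (i), I would start from the definition \eqref{e.ap}. Setting $\sigma = w^{1-p'}$, one observes that the roles of $w$ and $\sigma$ in the $A_p$ quantity are symmetric under the substitution $p \leftrightarrow p'$: indeed $\sigma^{1-p} = w^{(1-p')(1-p)} = w$ since $(1-p')(1-p) = 1$, and the exponent $p'-1$ appearing in $[\sigma]_{A_{p'}}$ pairs with $\langle w \rangle_Q$ raised to the power one. Writing out $[\sigma]_{A_{p'}} = \sup_Q \langle \sigma \rangle_Q (\langle w \rangle_Q)^{p'-1}$ and comparing with $[w]_{A_p} = \sup_Q \langle w \rangle_Q (\langle \sigma \rangle_Q)^{p-1}$, one sees each is the $(p'-1)$-th, resp. $(p-1)$-th, power of the common cube-wise quantity $\langle w \rangle_Q^{1/?}\langle \sigma\rangle_Q^{1/?}$; taking suprema and using $(p-1)(p'-1)=1$ gives $[\sigma]_{A_{p'}} = [w]_{A_p}^{p'-1}$ exactly, with equality rather than mere comparability.

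For part (ii), fix a cube $Q$ and abbreviate $\langle u \rangle_Q = \dashint_Q u$. The claim is that $w \in A_q \cap \operatorname{RH}_s$ iff $w^s \in A_{r}$ with $r = s(q-1)+1$, together with the two-sided bound \eqref{eq:aqrh}. The key identity is that $r-1 = s(q-1)$, hence $r' - 1 = \frac{1}{s(q-1)}$ and $(r')^{-1}$-type exponents convert cleanly. For the upper bound: expand $[w^s]_{A_r} = \sup_Q \langle w^s \rangle_Q (\langle w^{s(1-r')}\rangle_Q)^{r-1}$. Bound $\langle w^s \rangle_Q \le [w]_{\operatorname{RH}_s}^s \langle w \rangle_Q^s$ by the reverse Hölder inequality. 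For the second factor, note $s(1-r') = s - s r' = 1 - \frac{q'}{(q-1)^{?}}\cdots$ — more cleanly, one checks $s(1-r')\cdot\frac{1}{?}$; the right computation is that $w^{s(1-r')}$ has exponent matching $w^{1-q'}$ up to the power that makes $(\langle w^{s(1-r')}\rangle_Q)^{r-1} \le (\langle w^{1-q'}\rangle_Q)^{(q-1)\cdot(\text{something})}$ via Jensen/Hölder when the exponent $s(1-r')/(1-q') \ge 1$ or $\le 1$ as appropriate; combining yields $[w^s]_{A_r} \le [w]_{\operatorname{RH}_s}^s [w]_{A_q}^s$. For the lower bound $\max\{[w]_{A_q}^s, [w]_{\operatorname{RH}_s}^s\} \le [w^s]_{A_r}$: the inequality $[w]_{\operatorname{RH}_s}^s \le [w^s]_{A_r}$ follows because dropping the (nonnegative-exponent) second factor and using $\langle w \rangle_Q^s \le \langle w^s\rangle_Q$ trivially won't do — instead one uses that $\langle w^s\rangle_Q (\langle w^{s(1-r')}\rangle_Q)^{r-1} \ge \langle w^s \rangle_Q \langle w \rangle_Q^{-s}$ by Jensen applied to the second factor, giving exactly $[w]_{\operatorname{RH}_s}^s$ in the sup; and $[w]_{A_q}^s \le [w^s]_{A_r}$ follows by Jensen $\langle w^s\rangle_Q \ge \langle w\rangle_Q^s$ applied to the first factor together with an exponent rearrangement in the second. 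I would record these as two short displayed chains of inequalities.

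The main obstacle is purely notational: keeping the exponents $s$, $q$, $r = s(q-1)+1$, $q'$, $r'$ straight and correctly identifying in each Hölder/Jensen step which way the inequality goes (this depends on whether the relevant ratio of exponents exceeds $1$). I expect the cleanest route is to cite the standard references — this is classical and appears, e.g., in Johnson–Neugebauer and in the Cruz-Uribe–Martell–Pérez book — so I would state the lemma with a brief indication "these follow by direct computation from the definitions; see \cite{SW}" and relegate the exponent chase to a one-paragraph proof or a reference, since nothing in the argument is novel and the paper only needs the stated inequalities as a black box for the extrapolation in Corollary \ref{c.allp}.
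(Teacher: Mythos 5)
Your proposal is correct and, in substance, matches the paper's treatment: the paper states Lemma \ref{lem:WeightProperties} without proof, treating both items as classical facts (part (i) is standard $A_p$ duality; part (ii) is the Johnson--Neugebauer-type characterization), and your plan of verifying them by direct manipulation of averages, or simply citing the literature, is exactly what is intended. The one place you hedged resolves more cleanly than you anticipated: with $r=s(q-1)+1$ one has the \emph{exact} identity $s(1-r')=-s/(r-1)=-1/(q-1)=1-q'$, so
\begin{equation*}
[w^s]_{A_r}=\sup_Q \,\langle w^s\rangle_Q\big(\langle w^{1-q'}\rangle_Q\big)^{s(q-1)},
\end{equation*}
and no auxiliary Jensen/H\"older step with a case distinction on the ratio $s(1-r')/(1-q')$ is needed (that ratio equals $1$). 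The upper bound in \eqref{eq:aqrh} then follows from $\langle w^s\rangle_Q\le [w]_{\operatorname{RH}_s}^s\langle w\rangle_Q^s$ together with the exact factorization $\langle w\rangle_Q^s\big(\langle w^{1-q'}\rangle_Q\big)^{s(q-1)}=\big(\langle w\rangle_Q(\langle w^{1-q'}\rangle_Q)^{q-1}\big)^s\le [w]_{A_q}^s$; the bound $[w]_{A_q}^s\le[w^s]_{A_r}$ follows from Jensen's inequality $\langle w^s\rangle_Q\ge\langle w\rangle_Q^s$ and the same factorization; and $[w]_{\operatorname{RH}_s}^s\le[w^s]_{A_r}$ from convexity of $t\mapsto t^{1-q'}$, exactly as you wrote. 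Part (i) indeed gives equality, since $\sigma^{1-p}=w$ and $\langle\sigma\rangle_Q\langle w\rangle_Q^{p'-1}=\big(\langle w\rangle_Q\langle\sigma\rangle_Q^{p-1}\big)^{p'-1}$. The only remaining loose end is the endpoint cases $q=1$ and $q=\infty$ in (ii), which require the usual conventions for $A_1$ and $A_\infty$ but no new ideas.
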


For $1\leq p_{0} < 2 < q_{0} \leq \infty$ and  $p \in (p_{0},q_{0})$ define
$$
\phi(p) := \Big(\frac{q_{0}}{p}\Big)' \Big(\frac{p}{p_{0}} - 1\Big) + 1.
$$
The dependence of $\phi$ on $p_{0}$ and $q_{0}$ will be kept
implicit. From the previous lemma, we get that a weight $w$ will be contained in the class
$A_{\frac{p}{p_{0}}} \cap \operatorname{RH}_{(\frac{q_{0}}{p})'}$ if and only if
$w^{(\frac{q_{0}}{p})'}$ is contained in $A_{\phi(p)}$ and it will be
true that
\begin{equation}
  \label{eqtn:WeightProperty}
[w^{(\frac{q_{0}}{p})'}]_{A_{\phi(p)}} \leq
\Big([w]_{A_{\frac{p}{p_{0}}}}  [w]_{\operatorname{RH}_{(\frac{q_{0}}{p})'}}\Big)^{(\frac{q_{0}}{p})'}.
\end{equation}

A restricted range extrapolation result presented in \cite{AuscherMartellIGeneral} can be used to obtain
$L^{p}(w)$-boundedness for the full range of $p \in (p_{0},q_{0})$ and
$w \in A_{\frac{p}{p_{0}}} \cap \operatorname{RH}_{(\frac{q_{0}}{p})'}$ directly
from the $L^{q}(w)$-boundedness for all $w \in A_{\frac{q}{p_{0}}} \cap \operatorname{RH}_{(\frac{q_{0}}{q})'}$ of a single index
$q \in (p_{0},q_{0})$. In the result in \cite{AuscherMartellIGeneral}
the dependence of the bound on the weight characteristic
$[w^{(\frac{q_{0}}{p})'}]_{\phi(p)}$ is not stated. Through 
inspection of the proof, by tracing the relevant constants, it is not difficult to see that the
extrapolation result has the following sharp dependence on the
weight characteristic. 

\begin{theorem}[Sharp Restricted Range Extrapolation {\cite{AuscherMartellIGeneral}*{Theorem 4.9}}] 
  \label{thm:Extrapolation} 
  Let $0 < p_{0} < q_{0} \leq \infty$. Let $\mathcal{F} $ denote a family of ordered pairs of non-negative,
measurable functions $(f, g)$. Suppose that there exists an increasing function $\varphi$ and 
  $p$ with $p_{0} \leq p < q_{0}$
  such that for $(f,g) \in \mathcal{F}$,  
  \begin{equation}
 \label{extrapol-q}
    \|f\|_{L^{p}(w)} \leq C \varphi([w^{(\frac{q_{0}}{p})'}]_{A_{\phi(p)}}) \|g\|_{L^{p}(w)}
    \quad \text{ for all } w\in A_{\frac{p}{p_{0}}} \cap \operatorname{RH}_{(\frac{q_{0}}{p})'},
  \end{equation}
  for some $\alpha > 0$ and $C > 0$ independent of the weight.
  Then, for all $p_{0} < q < q_{0}$ and $(f,g) \in \mathcal{F}$ we have
  
  \begin{equation*}
    \|f\|_{L^{q}(w)}
    \leq C' \varphi([w^{(\frac{q_{0}}{q})'}]_{A_{\phi(q)}})^{\beta(p,q)} \|g\|_{L^{q}(w)}
    \quad \text{ for all } w \in A_{\frac{q}{p_{0}}} \cap \operatorname{RH}_{(\frac{q_{0}}{q})'},
  \end{equation*} 
  where $\beta(p,q) \coloneqq \max (1, \frac{(q_{0} - q)(p -
      p_{0})}{(q_{0} - p)(q - p_{0})})$ and $C' > 0$ is independent
  of the weight.
\end{theorem}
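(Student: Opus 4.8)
The plan is to revisit the proof of the qualitative restricted range extrapolation theorem of \cite{AuscherMartellIGeneral}*{Theorem~4.9} — a Rubio de Francia iteration carried out in the range $(p_0,q_0)$ — and to track the dependence on the weight characteristic at every step. Since $\varphi$ is only assumed increasing, it suffices to show that, for each fixed $w\in A_{q/p_0}\cap\operatorname{RH}_{(q_0/q)'}$, the Rubio de Francia weight $W\in A_{p/p_0}\cap\operatorname{RH}_{(q_0/p)'}$ produced by the scheme satisfies
\[
\big[W^{(q_0/p)'}\big]_{A_{\phi(p)}}\ \lesssim\ \big[w^{(q_0/q)'}\big]_{A_{\phi(q)}}^{\,\beta(p,q)};
\]
inserting this into \eqref{extrapol-q} at the exponent $p$ (with weight $W$) and undoing the reduction via Lemma~\ref{lem:WeightProperties}(ii) and \eqref{eqtn:WeightProperty} then closes the argument. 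The algebraic backbone is the identity
\[
\phi(r)-1=\Big(\tfrac{q_0}{r}\Big)'\Big(\tfrac{r}{p_0}-1\Big)=\frac{q_0\,(r-p_0)}{p_0\,(q_0-r)},\qquad p_0<r<q_0,
\]
which gives $\dfrac{\phi(p)-1}{\phi(q)-1}=\dfrac{(q_0-q)(p-p_0)}{(q_0-p)(q-p_0)}$, so that $\beta(p,q)=\max\{1,\tfrac{\phi(p)-1}{\phi(q)-1}\}$; this is exactly the power that has to come out of the iteration.

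After the reduction $v:=w^{(q_0/q)'}\in A_{\phi(q)}$, with $\phi$ increasing, we may assume $p\ne q$ and split into two regimes. If $p<q$, then $\phi(p)<\phi(q)$ and $\beta(p,q)=1$: fixing $w$ one writes $\|f\|_{L^q(w)}^{p}=\sup\{\int|f|^p h\,w:\ h\ge0,\ \|h\|_{L^{(q/p)'}(w)}=1\}$ and applies a single pass of the Rubio de Francia construction to $h$, built from the maximal operators adapted to the range — namely $M_{p_0}(\cdot)=M(|\cdot|^{p_0})^{1/p_0}$ at the lower endpoint and a companion maximal operator at the upper endpoint, tailored so that $W:=(Rh)\,w$ inherits $\operatorname{RH}_{(q_0/p)'}$ and lies in $A_{p/p_0}$ — and tracks the constants using the sharp weighted maximal bound $\|M\|_{L^{t}(u)\to L^{t}(u)}\lesssim[u]_{A_t}^{1/(t-1)}$ at the exponents dictated by $\phi(p)$ and $\phi(q)$; this yields the displayed estimate with power $1$. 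If $q<p$, then $\phi(p)>\phi(q)$ and one pass is insufficient: here $W$ is taken to be a power perturbation of $w$ by a Rubio de Francia iterate, with the exponent of the perturbation chosen so that $W$ still lies in $A_{p/p_0}\cap\operatorname{RH}_{(q_0/p)'}$, the admissible range of that exponent being governed precisely by the ratio $\tfrac{\phi(p)-1}{\phi(q)-1}$; the same Buckley-sharp bookkeeping then produces the displayed estimate with power $\beta(p,q)=\tfrac{\phi(p)-1}{\phi(q)-1}$. In effect, after the $\phi$-reduction the count of constants is that of the classical sharp Rubio de Francia extrapolation between $A_{\phi(p)}$ and $A_{\phi(q)}$, the only extra ingredient being the second maximal operator that manufactures the reverse-Hölder factor.

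Inserting the above into \eqref{extrapol-q} and using once more that $\varphi$ is increasing, one obtains $\|f\|_{L^q(w)}\le C'\,\varphi\big(c\,[w^{(q_0/q)'}]_{A_{\phi(q)}}^{\,\beta(p,q)}\big)\|g\|_{L^q(w)}$; when $\varphi(t)=t^{\alpha}$ — the only case needed in the applications, e.g. in Corollary~\ref{c.allp} — this is, up to the harmless constant $c$, the stated bound $\varphi([w^{(q_0/q)'}]_{A_{\phi(q)}})^{\beta(p,q)}$. I expect the main obstacle to lie not in any single estimate but in the bookkeeping: verifying that the Rubio de Francia weight $W$ genuinely belongs to $A_{p/p_0}\cap\operatorname{RH}_{(q_0/p)'}$ with characteristic controlled by the correct power of $[w^{(q_0/q)'}]_{A_{\phi(q)}}$ forces one to combine the two-sided comparison \eqref{eq:aqrh}, the identity \eqref{eqtn:WeightProperty}, the sharp maximal bound at the two relevant exponents, and the $\phi$-identity above in exactly the right order. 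One also has to treat the degenerate case $q_0=\infty$ separately — there $\operatorname{RH}_{(q_0/r)'}=\operatorname{RH}_{1}$ is vacuous, the second maximal operator disappears, and after the rescaling $\tilde f=f^{p_0}$ the scheme collapses to the classical sharp Rubio de Francia extrapolation, which is a useful consistency check — and to keep an eye on the endpoint $p=p_0$, where $A_{p/p_0}=A_1$.
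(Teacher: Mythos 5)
Your approach coincides with the paper's: the authors give no self-contained argument but simply cite Auscher--Martell Theorem 4.9 and assert that tracing constants through its Rubio de Francia iteration yields the stated dependence, which is exactly the retracing you outline, and your algebraic identity $\beta(p,q)=\max\{1,(\phi(p)-1)/(\phi(q)-1)\}$ correctly identifies the claimed exponent with the classical sharp-extrapolation power after the change of weight $w\mapsto w^{(q_0/q)'}$. Your closing caveat --- that the iteration naturally produces $\varphi\big(c\,[w^{(q_0/q)'}]_{A_{\phi(q)}}^{\beta(p,q)}\big)$ rather than $\varphi(\cdot)^{\beta(p,q)}$, the two agreeing when $\varphi$ is a power --- is consistent with how the theorem is actually used in Corollary \ref{c.allp} (note the vestigial ``for some $\alpha>0$'' in the statement), so the proposal matches the paper's proof in both route and level of rigor.
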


 We remark that the quantitative version of the extrapolation theorem in \cite{BBR}*{Theorem 2.15} is slightly less general than Theorem \ref{thm:Extrapolation}.
\begin{corollary}\label{corol:extrapol-weak}
Let $0<p_0<q_0\le \infty$. Suppose that there exists an increasing function $\varphi$ and $p$ with $p_0\le
p\le q_0$, and $p<\infty$ if $q_0=\infty$, such that for $(f,g)\in
\mathcal{F}$,
\begin{equation}\label{extrapol-p:weak}
\|f\|_{L^{p,\infty}(w)}
\le
 C \varphi([w^{(\frac{q_{0}}{p})'}]_{A_{\phi(p)}})\|g\|_{L^{p}(w)}
\quad
\mbox{for all }w\in A_{\frac{p}{p_0}}\cap
\operatorname{RH}_{(\frac{q_0}{p})'}.
\end{equation}
Then, for all $p_0<q<q_0$ and $(f,g)\in\mathcal{F}$ we have
\begin{equation}\label{extrapol-q:weak}
\|f\|_{L^{q,\infty}(w)}
\le
 C' \varphi([w^{(\frac{q_{0}}{q})'}]_{A_{\phi(q)}})^{\beta(p,q)} \|g\|_{L^{q}(w)}
\quad
\mbox{for all }w\in A_{\frac{q}{p_0}}\cap
\operatorname{RH}_{(\frac{q_0}{q})'}.
\end{equation}
\end{corollary}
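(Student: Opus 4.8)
The plan is to derive this weak-type statement from the strong-type restricted range extrapolation, Theorem~\ref{thm:Extrapolation}, by a standard level-set linearization, so that no essentially new argument is needed. The starting point is the elementary identity
\[
\|h\|_{L^{r,\infty}(w)}=\sup_{t>0}\,t\,w\big(\{|h|>t\}\big)^{1/r}=\sup_{t>0}\,\big\|\,t\,\1_{\{|h|>t\}}\,\big\|_{L^{r}(w)},
\]
valid for every $r\in(0,\infty)$, every weight $w$ and every measurable $h$, since $\|t\,\1_{\{|h|>t\}}\|_{L^{r}(w)}^{r}=t^{r}\,w(\{|h|>t\})$. Consequently, hypothesis \eqref{extrapol-p:weak} is \emph{equivalent} to the assertion that for every $(f,g)\in\mathcal F$, every $t>0$ and every $w\in A_{p/p_0}\cap\operatorname{RH}_{(q_0/p)'}$ one has
\[
\big\|\,t\,\1_{\{|f|>t\}}\,\big\|_{L^{p}(w)}\le C\,\varphi\big([w^{(q_0/p)'}]_{A_{\phi(p)}}\big)\,\|g\|_{L^{p}(w)}.
\]

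Accordingly, set
\[
\widetilde{\mathcal F}:=\big\{\,\big(t\,\1_{\{|f|>t\}},\,g\big)\ :\ (f,g)\in\mathcal F,\ t>0\,\big\},
\]
a family of ordered pairs of non-negative measurable functions. The displayed inequality above says exactly that $\widetilde{\mathcal F}$ satisfies hypothesis \eqref{extrapol-q} of Theorem~\ref{thm:Extrapolation} at the exponent $p$, with the same increasing function $\varphi$ and the same constant $C$. When $p<q_0$, Theorem~\ref{thm:Extrapolation} then applies and yields, for every $q$ with $p_0<q<q_0$, every $w\in A_{q/p_0}\cap\operatorname{RH}_{(q_0/q)'}$, and every pair in $\widetilde{\mathcal F}$,
\[
\big\|\,t\,\1_{\{|f|>t\}}\,\big\|_{L^{q}(w)}\le C'\,\varphi\big([w^{(q_0/q)'}]_{A_{\phi(q)}}\big)^{\beta(p,q)}\,\|g\|_{L^{q}(w)},
\]
with $C'$ independent of the weight. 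Taking the supremum over $t>0$ and invoking the identity of the first paragraph once more gives precisely \eqref{extrapol-q:weak}.

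The only point requiring separate attention is the endpoint $p=q_0$, which is admissible in the hypothesis of Corollary~\ref{corol:extrapol-weak} when $q_0<\infty$ but is not covered by Theorem~\ref{thm:Extrapolation}, whose statement requires $p<q_0$. There one runs the identical level-set reduction but feeds $\widetilde{\mathcal F}$ into the $p=q_0$ case of \cite{AuscherMartellIGeneral}*{Theorem~4.9}, with the usual conventions $(q_0/q_0)'=\infty$, $\phi(q_0)=\infty$ and $A_{\phi(q_0)}=A_{\infty}$, whose quantitative dependence on the weight characteristic is extracted from the proof exactly as for Theorem~\ref{thm:Extrapolation}. This endpoint case plays no role in the applications of Corollary~\ref{corol:extrapol-weak} in this paper, where one always starts from $p=2<q_0$. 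In short, the only substantive input is the strong-type extrapolation of Theorem~\ref{thm:Extrapolation}: the ``hard part'', namely tracking the quantitative dependence through the Auscher--Martell argument, has already been handled there, and the level-set device transfers it to the weak-type setting at no cost.
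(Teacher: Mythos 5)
Your proof is correct and essentially identical to the paper's: both linearize the weak-type norm via the level-set family $\widetilde{\mathcal F}=\{(\lambda\,\1_{\{f>\lambda\}},g)\}$, observe that the strong and weak $L^p$ norms agree for such indicator pairs, feed $\widetilde{\mathcal F}$ into Theorem~\ref{thm:Extrapolation}, and take the supremum over $\lambda$. Your extra remark on the endpoint $p=q_0$ is a reasonable observation (the paper's proof silently assumes $p<q_0$ when invoking Theorem~\ref{thm:Extrapolation}), but it does not change the argument in the only case the paper actually uses, namely $p=2$.
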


\begin{proof}
The proof is by now well-known, see e.g. \cites{AuscherMartellIGeneral,GM}. Given $(f,g)\in \mathcal{F}$
and  any $\lambda>0$, define a pair of functions $(f_\lambda,g)$
where $f_\lambda=\lambda\,\chi_{E_\lambda(f)}$   and
$E_\lambda(f)=\{f>\lambda\}$. By using \eqref{extrapol-p:weak} we have
$$
\|f_\lambda\|_{L^p(w)}
=
\lambda\,w(E_\lambda(f))^\frac1p
\le
\sup_\lambda \lambda\,w(E_\lambda(f))^\frac1p
=
\|f\|_{L^{p,\infty}(w)}
\le
 C \varphi([w^{(\frac{q_{0}}{q})'}]_{A_{\varphi(q)}})\|g\|_{L^{p}(w)}
$$
for all $w\in A_{\frac{p}{p_0}}\cap \operatorname{RH}_{(\frac{q_0}{p})'}
$.  Now we apply Theorem \ref{thm:Extrapolation} to the family
$\widetilde{\mathcal{F}}$ of  pairs $(f_\lambda,g)$, which satisfy \eqref{extrapol-q} with
$C$ independent of $\lambda$. After taking the supremum on
$\lambda>0$, we obtain \eqref{extrapol-q:weak}, as desired.
\end{proof}

\begin{proof}[Proof of Corollary \ref{c.allp}]
We have, by Corollary \ref{cor:explicit}, \eqref{eq:aqrh}, and the fact that $[w]_{A_{\infty}}\le [w]_{A_p}$ for all $p\in [1,\infty)$ (see, e.g. \cite{HP}),
\begin{align}
\label{eq:loss}
\notag &\|S(f)\|_{L^{2,\infty}(w)} \lesssim[w]_{A_{2/p_0}}^{1/2} [w]_{ \operatorname{RH}_{q_0^*}}^{1/2}[w]_{ \operatorname{RH}_{q_0^*}}^{\frac12-\frac{\gamma}{2}}   [w^{q_0^*}]_{A_{\infty}}^{1/2} \big(  [w^{q_0^*}]_{A_{\infty}} +\log[w]_{A_{\infty}}\big)^{1/2}\|f\|_{L^2(w)}\\
&\quad\lesssim  [w^{q_0^*}]_{A_{\phi(2)}}^{\frac{1}{q_0^*}}[w^{q_0^*}]_{A_{\phi(2)}}^{\frac{1}{q_0^*}(\frac{1}{2}-\frac{\gamma}{2})}[w^{q_0^*}]_{A_{\phi(2)}}^{\frac12}\Big([w^{q_0^*}]_{A_{\phi(2)}}+\frac{1}{q_0^*}\log([w^{q_0^*}]_{A_{(\phi(2)}})\Big)^{1/2}\|f\|_{L^2(w)}\\
&\quad= [w^{q_0^*}]_{A_{\phi(2)}}^{\frac{3-\gamma+q_0^*}{ 2q^*_0}}\Big([w^{q_0^*}]_{A_{\phi(2)}}+\frac{1}{q_0^*}\log([w^{q_0^*}]_{A_{(\phi(2)}})\Big)^{1/2}\|f\|_{L^2(w)}.
\end{align}
Now, Corollary \ref{corol:extrapol-weak} and Lemma \ref{lem:WeightProperties} (ii) yields, for all $p_{0} < q < q_{0}$,
\begin{align*}
&\|S(f)\|_{L^{q,\infty}(w)} \lesssim [w^{(q_0/q)'}]_{A_{\phi(q)}}^{\beta(2,q){\frac{3-\gamma+q_0^*}{ 2q^*_0}}}\Big([w^{(q_0/p)'}]_{A_{\phi(q)}}+\frac{1}{q_0^*}\log([w^{(q_0/q)'}]_{A_{\phi(q)}})\Big)^{\frac{\beta(2,q)}{2}}\|f\|_{L^q(w)}\\
&\quad \lesssim ([w]^{(q_0/q)'}_{A_{q/p_0}}[w]^{(q_0/q)'}_{\operatorname{RH}_{(q_0/q)'}})^{\beta(2,q){\frac{3-\gamma+q_0^*}{ 2q^*_0}}}\\
&\quad \quad\times\Big([w]^{(q_0/q)'}_{A_{q/p_0}}[w]^{(q_0/q)'}_{\operatorname{RH}_{(q_0/q)'}}+\frac{1}{q_0^*}\log([w]^{(q_0/q)'}_{A_{q/p_0}}[w]^{(q_0/q)'}_{\operatorname{RH}_{(q_0/q)'}})\Big)^{\frac{\beta(2,q)}{2}}\|f\|_{L^q(w)},
\end{align*}
which gives the desired bound.
\end{proof}

\begin{remark}
Notice that in \eqref{eq:loss} we have controlled our bound by a worse one to be able to apply the extrapolation theorem. This is precisely why we get weak type bounds which are worse than  the strong ones for certain $p$'s. 
\end{remark}


 \begin{bibdiv}
    \begin{biblist}

\bib{Au}{book}{
    AUTHOR = {Auscher, Pascal},
     TITLE = {Lectures on the {K}ato square root problem},
 BOOKTITLE = {Surveys in analysis and operator theory ({C}anberra, 2001)},
    SERIES = {Proc. Centre Math. Appl. Austral. Nat. Univ.},
    VOLUME = {40},
     PAGES = {1--18},
 PUBLISHER = {Austral. Nat. Univ., Canberra},
      YEAR = {2002},
}

\bib{AuscherMartellIGeneral}{article}{
    AUTHOR = {Auscher, Pascal},   AUTHOR = {Martell, Jos\'{e} Mar\'{\i}a},
     TITLE = {Weighted norm inequalities, off-diagonal estimates and
              elliptic operators. {I}. {G}eneral operator theory and
              weights},
   JOURNAL = {Adv. Math.},
    VOLUME = {212},
      YEAR = {2007},
    NUMBER = {1},
     PAGES = {225--276},
      ISSN = {0001-8708},
       DOI = {10.1016/j.aim.2006.10.002},
       URL = {https://doi.org/10.1016/j.aim.2006.10.002},
}
	\bib{AM3}{article}{
   AUTHOR = {Auscher, Pascal},   AUTHOR = {Martell, Jos\'{e} Mar\'{\i}a},
     TITLE = {Weighted norm inequalities, off-diagonal estimates and
              elliptic operators. {III}. {H}armonic analysis of elliptic
              operators},
   JOURNAL = {J. Funct. Anal.},
    VOLUME = {241},
      YEAR = {2006},
    NUMBER = {2},
     PAGES = {703--746},
      ISSN = {0022-1236},
       DOI = {10.1016/j.jfa.2006.07.008},
       URL = {https://doi.org/10.1016/j.jfa.2006.07.008},
}

\bib{BBR}{article}{
   author={Bailey, Julian},
   author={Brocchi, Gianmarco},
   author={Reguera, Maria Carmen},
   title={Quadratic sparse domination and weighted estimates for
   non-integral square functions},
   journal={J. Geom. Anal.},
   volume={33},
   date={2023},
   number={1},
   pages={Paper No. 20, 49},
   issn={1050-6926},
   review={\MR{4509112}},
   doi={10.1007/s12220-022-01031-w},
}

    \bib{BFP}{article}{
   author={Bernicot, Fr\'ed\'eric},
   author={Frey, Dorothee},
   author={Petermichl, Stefanie},
   title={Sharp weighted norm estimates beyond Calder\'on-Zygmund theory},
   journal={Anal. PDE},
   volume={9},
   date={2016},
   number={5},
   pages={1079--1113},
   issn={2157-5045},
   review={\MR{3531367}},
   doi={10.2140/apde.2016.9.1079},
}

\bib{DW}{article}{
    AUTHOR = {Dindo\v{s}, Martin},
author={Wall, Treven},
     TITLE = {The sharp {$A_p$} constant for weights in a reverse-{H}\"{o}lder
           class},
   JOURNAL = {Rev. Mat. Iberoam.},
    VOLUME = {25},
      YEAR = {2009},
    NUMBER = {2},
     PAGES = {559--594},
      ISSN = {0213-2230},
       DOI = {10.4171/RMI/576},
       URL = {https://doi.org/10.4171/RMI/576},
}

\bib{DLR}{article}{
  author={Domingo-Salazar, Carlos},
  author={Lacey, Michael},
 author={Rey, Guillermo},
 title={Borderline weak-type estimates for singular integrals and square
  functions},
  journal={Bull. Lond. Math. Soc.},
  volume={48},
  date={2016},
  number={1},
  pages={63--73},
  issn={0024-6093},
  review={\MR{3455749}},
   doi={10.1112/blms/bdv090},
}

\bib{G}{article}{
   author={Gehring, F. W.},
   title={The $L\sp{p}$-integrability of the partial derivatives of a
   quasiconformal mapping},
   journal={Acta Math.},
   volume={130},
   date={1973},
   pages={265--277},
   issn={0001-5962},
   review={\MR{0402038}},
   doi={10.1007/BF02392268},
}

\bib{GrafakosC}{book}{
   author={Grafakos, Loukas},
   title={Classical Fourier analysis},
   series={Graduate Texts in Mathematics},
   volume={249},
   edition={3},
   publisher={Springer, New York},
   date={2014},
   pages={xviii+638},
   isbn={978-1-4939-1193-6},
   isbn={978-1-4939-1194-3},
   review={\MR{3243734}},
   doi={10.1007/978-1-4939-1194-3},
}

\bib{GM}{article}{
    AUTHOR = {Grafakos, Loukas}, AUTHOR = {Martell, Jos\'{e} Mar\'{\i}a},
     TITLE = {Extrapolation of weighted norm inequalities for multivariable
              operators and applications},
   JOURNAL = {J. Geom. Anal.},
    VOLUME = {14},
      YEAR = {2004},
    NUMBER = {1},
     PAGES = {19--46},
      ISSN = {1050-6926},
       DOI = {10.1007/BF02921864},
       URL = {https://doi.org/10.1007/BF02921864},
}

\bib{MR3778152}{article}{
   author={Hyt\"onen, Tuomas P.},
   author={Li, Kangwei},
   title={Weak and strong $A_p$-$A_\infty$ estimates for square functions
   and related operators},
   journal={Proc. Amer. Math. Soc.},
   volume={146},
   date={2018},
   number={6},
   pages={2497--2507},
   issn={0002-9939},
   review={\MR{3778152}},
   doi={10.1090/proc/13908},
}

\bib{HP}{article}{
   author={Hyt\"onen, Tuomas},
   author={P\'erez, Carlos},
   title={Sharp weighted bounds involving $A_\infty$},
   journal={Anal. PDE},
   volume={6},
   date={2013},
   number={4},
   pages={777--818},
   issn={2157-5045},
   review={\MR{3092729}},
   doi={10.2140/apde.2013.6.777},
}  

\bib{HPR}{article}{
   author={Hyt\"onen, Tuomas},
   author={P\'erez, Carlos},
   author={Rela, Ezequiel},
   title={Sharp reverse H\"older property for $A_\infty$ weights on spaces
   of homogeneous type},
   journal={J. Funct. Anal.},
   volume={263},
   date={2012},
   number={12},
   pages={3883--3899},
   issn={0022-1236},
   review={\MR{2990061}},
   doi={10.1016/j.jfa.2012.09.013},
}

\bib{ivanisvili2022strong}{article}{
      title={Strong weighted and restricted weak weighted estimates of the square function}, 
      author={Ivanisvili, P.},
      author={Mozolyako, P.}, 
      author={Volberg, A.},
      date={2022},
      eprint={https://arxiv.org/abs/1804.06869},
}

\bib{KS}{article}{
    AUTHOR = {Kinnunen, Juha},
    author={Shukla, Parantap},
     TITLE = {Gehring's lemma and reverse {H}\"{o}lder classes on metric measure
              spaces},
   JOURNAL = {Comput. Methods Funct. Theory},
    VOLUME = {14},
      YEAR = {2014},
    NUMBER = {2-3},
     PAGES = {295--314},
      ISSN = {1617-9447},
       DOI = {10.1007/s40315-014-0071-1},
       URL = {https://doi.org/10.1007/s40315-014-0071-1},
}

\bib{MR2770437}{article}{
   author={Lerner, Andrei K.},
   title={Sharp weighted norm inequalities for Littlewood--Paley operators
   and singular integrals},
   journal={Adv. Math.},
   volume={226},
   date={2011},
   number={5},
   pages={3912--3926},
   issn={0001-8708},
   review={\MR{2770437}},
   doi={10.1016/j.aim.2010.11.009},
}

\bib{NS}{article}{
      title={Endpoint weak type bounds beyond Calder\'on-Zygmund theory}, 
      author={Nieraeth, Zoe},
      author={Stockdale, Cody B.}, 
      date={2024},
      eprint={https://arxiv.org/abs/2409.08921},
}

\bib{Po}{article}{
    AUTHOR = {Popoli, Arturo},
     TITLE = {Sharp integrability exponents and constants for {M}uckenhoupt
            and {G}ehring weights as solution to a unique equation},
   JOURNAL = {Ann. Acad. Sci. Fenn. Math.},
    VOLUME = {43},
      YEAR = {2018},
    NUMBER = {2},
     PAGES = {785--805},
      ISSN = {1239-629X},
       DOI = {10.5186/aasfm.2018.4351},
       URL = {https://doi.org/10.5186/aasfm.2018.4351},
}

\bib{St}{book}{
    AUTHOR = {Stein, Elias M.},
     TITLE = {Singular integrals and differentiability properties of
              functions},
    SERIES = {Princeton Mathematical Series, No. 30},
 PUBLISHER = {Princeton University Press, Princeton, NJ},
      YEAR = {1970},
     PAGES = {xiv+290},
}
\bib{SW}{article}{
   author={Str\"omberg, Jan-Olov},
   author={Wheeden, Richard L.},
   title={Fractional integrals on weighted $H^p$ and $L^p$ spaces},
   journal={Trans. Amer. Math. Soc.},
   volume={287},
   date={1985},
   number={1},
   pages={293--321},
   issn={0002-9947},
   review={\MR{0766221}},
   doi={10.2307/2000412},
}

\bib{MR2414491}{article}{
   author={Wilson, Michael},
   title={The intrinsic square function},
   journal={Rev. Mat. Iberoam.},
   volume={23},
   date={2007},
   number={3},
   pages={771--791},
   issn={0213-2230},
   review={\MR{2414491}},
   doi={10.4171/RMI/512},
}

\bib{MR2359017}{book}{
   author={Wilson, Michael},
   title={Weighted Littlewood--Paley theory and exponential-square
   integrability},
   series={Lecture Notes in Mathematics},
   volume={1924},
   publisher={Springer, Berlin},
   date={2008},
   pages={xiv+224},
   isbn={978-3-540-74582-2},
   review={\MR{2359017}},
}
\end{biblist}
\end{bibdiv}

\end{document}